\newcommand{\ld}{\lambda}
\newcommand{\ga}{\gamma}
\newcommand{\al}{\alpha}
\newcommand{\be}{\beta}
\newcommand{\de}{\delta}
\newcommand{\si}{\sigma}
\newcommand{\ep}{\epsilon}
\newcommand{\Ga}{\Gamma}
\newcommand{\tht}{\theta}
\newcommand{\beqq}{\begin{equation*}}
\newcommand{\eeqq}{\end{equation*}}
\newcommand{\beq}{\begin{equation}}
\newcommand{\eeq}{\end{equation}}
\newtheorem{theorem}{Theorem}
\newtheorem{lemma}[theorem]{Lemma}
\theoremstyle{definition}
\newtheorem{definition}[theorem]{Definition}
\newtheorem{proposition}[theorem]{Proposition}
\newtheorem{corollary}[theorem]{Corollary}
\theoremstyle{remark}
\newtheorem{remark}[theorem]{Remark}
\begin{document}
\title[]{Ergodic maximization problem for expanding maps with differentiable observables}

\author[X. Zhang]{Xu Zhang}
\address[X. Zhang]{Department of Mathematics, Shandong University, Weihai, Shandong, 264209,  China}
\email{xu\_zhang\_sdu@mail.sdu.edu.cn}

\keywords{Entropy; ergodic maximization; invariant measure; observable; periodic orbit; shadowing}

\begin{abstract}
We show that for an expanding map, the maximizing measures of a generic (open and dense) $C^r$ ($r\in\mathbb{N}$) differentiable functions are supported on a single periodic orbit. [There is a gap in the discussions. For the $C^{\infty}$ approximation of the Lipschitz functions, we can only control the $C^1$ derivative, but we can not control the $C^r$ derivatives for $r\geq2$. Elegant approximation methods might be needed to solve this problem.]
\end{abstract}

\maketitle

\section{Introduction}

Ergodic theory relates closely to the iteration of a measure preserving transformation $T$ on a metric space $(X,d)$ with the probability structure $(X,\mathcal{B},\mu)$. If $\mu(T^{-1}(B))=\mu(B)$ for any $B\in\mathcal{B}$, then $\mu$ is called an invariant measure.  An observable is a continuous function $f:X\to\mathbb{R}$. The time average of the real-valued function $f$ along the orbit is $\lim_{n\to+\infty}\frac{1}{n}\sum^{n-1}_{i=0}f(T^i(x))$ if the limit exists. The space average of the real-valued function $f$ with respect to an invariant measure $\mu$ is $\int_Xfd\mu$. For an observable $f$, the maximizing orbit for $f$ is the orbit giving the maximum time average of $f$, and the maximizing invariant probability measure for $f$ is the measure giving the maximum space average of $f$. If the invariant measure is ergodic, the classical Birkhoff ergodic theorem tells us that the time average of the observable is equal to the space average of the observable for almost ever point in the view point of the ergodic measure. Ergodic optimization is the study on the problems of maximizing orbits and invariant measures, and can be applied in the control of chaos  \cite{ShinbortGrebogiOttYorke1993,OttGrebogiYorke1990}, the Aubry-Mather theory in Lagrangian mechanics \cite{Contreras2015, Mane1996}, and  the ground state theory in thermodynamics formalism and multifractal analysis \cite{BaravieraLeplaideurLopes2013}, and so on.

A core  problem in the field of ergodic
optimization theory is the Typical Periodic
Optimization (TPO) Conjecture for the uniformly expanding maps and the uniformly hyperbolic system,  which was proposed by Yuan and Hunt \cite{YuanHunt1999}. For the real-valued observables taken from some kind of smooth (H\"{o}lder continuous, Lipschitz continuous, differentiable) function spaces, denoted by $V$ the set of observables such that the maximizing measures for each element of $V$ contains a periodic measure,  the TPO Conjecture claims that $V$ is an open and dense subset of the function space.

There are many results towards TPO Conjecture.  A series of work on the subshift of finite type are obtained: Coelho's work on the zero temperature limits of equilibrium states and the study on cohomology equation \cite{Coelho1990} , the Walter observable functions by Bousch \cite{Bousch2001}, a
fullshift with the space of ``super continuous" functions \cite{QuasSiefken2012}, a one-side shift on two symbols with a space of functions
with strong modulus of regularity \cite{BochiZhang2016},  a subshift of finite type with H\"{o}lder continuous functions and zero entropy \cite{Morris2008}.

On the other hand, there are a lot of important results on TPO Conjecture for maps.
Conze and Guivarc'h \cite{ConzeGuivarch1993} improved Coelho's
result by showing the existence of  a continuous function $\varphi$ such that
$\tilde{f} := f +\varphi-\varphi\circ T \leq \sup\int_{\mu\in\mathcal{M}(T)}f d\mu$, where $\mathcal{M}(T)$ is the set of invariant measures, the maximizing measures are those invariant probability measures whose support lies in the set of global maxima of the function $\tilde{f}$. And they also investigated a map $T(x) = 2x (\mbox{mod} 1)$ with observable $f_{\tht}(x) =
\cos(2\pi(x-\tht))$. Later, this model was completely solved based on the work of Hunt and Ott \cite{HuntOtt1996}, Jenkinson via Sturmian measures \cite{Jenkinson1996, Jenkinson2000}, and Bousch by studying the function $\tilde{f}$ \cite{Bousch2000}.
Contreras, Lopes and Thieullen considered a smooth orientation-preserving uniformly expanding map of the circle with the H\"{o}lder functions space \cite{ContrLopesThieu2001}, using techniques inspired by Ma\~{n}\'{e} \cite{Mane1992, Mane1996}, who had established a similar characterization of $\tilde{f}$ in the context of Lagrangian systems. In \cite{Contreras2016}, Contreras verified the TPO Conjecture for the expanding maps with Lipschitz observables. Recently, the TPO Conjecture for the hyperbolic maps with $C^1$ obervables  \cite{HuangLianMa2019a}, and the Aixom A flows with $C^1$ observables \cite{HuangLianMa2019b} were solved.
For a survey of recent development in this problem, please refer to \cite{Jenkinson2019}.

In this paper, we adopt the arguments in \cite{Contreras2016} to verify the TPO Conjecture for an expanding map with a generic (open and dense) $C^r$ ($r\in\mathbb{N}$) differentiable functions. The main idea is the combination of the entropy argument in \cite{Contreras2016} and the smooth perturbation instead of the Lipschitz perturbation used in \cite{Contreras2016}.

\begin{theorem}\label{entarg-3}
Let $X$ be a compact metric space and $T:X\to X$ be an expanding map, then there is an open and dense set $\mathcal{O}\subset C^{r}(X,\mathbb{R})$ ($r\in\mathbb{N}$) such that for all $F\in\mathcal{O}$, there exists a single $F$-maximizing measure and it is supported on a periodic orbit.
\end{theorem}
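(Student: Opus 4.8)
The plan is to exhibit an explicit candidate set $\mathcal{O}$ and prove separately that it is open and that it is dense, the density half being modelled on the argument of Contreras for Lipschitz observables but carried out with smooth perturbations. Write $\beta(F):=\sup\{\int_X F\,d\mu:\mu\in\mathcal{M}(T)\}$ and let $\mathcal{O}$ be the set of $F\in C^{r}(X,\mathbb{R})$ possessing a unique maximizing measure, that measure being supported on a periodic orbit. For openness I would prove a lock-in lemma in the spirit of Yuan--Hunt: if $F\in\mathcal{O}$ with maximizing measure $\mu_\Gamma$ on the periodic orbit $\Gamma$, then using a Lipschitz sub-action (Ma\~{n}\'e--Conze--Guivarc'h) for $F$ together with the fact that $\Gamma$ is isolated as an invariant set --- which holds because $T$ is expanding --- one shows that every $G$ with $\|G-F\|_{C^{0}}$ small and Lipschitz constant comparable to that of $F$ has all of its maximizing measures supported on $\Gamma$, hence equal to $\mu_\Gamma$, the unique invariant measure carried by $\Gamma$. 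Since $r\ge 1$, a $C^{r}$-neighbourhood of $F$ consists of such $G$, so $\mathcal{O}$ is open.

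For density, fix $F\in C^{r}(X,\mathbb{R})$ and $\varepsilon>0$. The first step is the entropy reduction of Contreras, run with smooth bumps. Choose a sub-action $\varphi$ for $F$ and set $\tilde F:=F+\varphi-\varphi\circ T\le\beta(F)$; the Aubry set $\mathcal{A}:=\{\tilde F=\beta(F)\}$ is compact, $T$-invariant, and contains the support of every $F$-maximizing measure. If some maximizing measure has positive entropy, then by the variational principle $\mathcal{A}$ carries a sub-system of positive topological entropy, and --- using the uniform expansion of $T$ to supply definite room --- a $C^{r}$ bump supported in a small ball meeting that sub-system lowers $\tilde F$ strictly below $\beta(F)$ there without creating new near-maximizing behaviour. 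Iterating this with a summable family of $C^{r}$ perturbations produces $F_{1}$ with $\|F-F_{1}\|_{C^{r}}<\varepsilon/2$ all of whose maximizing measures have zero entropy. (Here $\varphi$ is merely an auxiliary Lipschitz object; it is never added to $F$, only genuine $C^{r}$ bumps are.)

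For the second step, let $\mu$ be an ergodic maximizing measure of $F_{1}$, so $h_\mu(T)=0$. By Poincar\'e recurrence and the shadowing property of the expanding map $T$, $\mu$-typical orbit segments of length $n$ are shadowed by periodic orbits, and since such a segment nearly realises $\beta(F_{1})$ as a Birkhoff average of the Lipschitz function $\tilde F_{1}$, one obtains periodic orbits $\Gamma$ with $\int F_{1}\,d\mu_\Gamma$ arbitrarily close to $\beta(F_{1})$. Fix one such $\Gamma$ and add a $C^{r}$ bump $\psi\ge 0$ equal to a small $\delta>0$ on a tiny neighbourhood of $\Gamma$, vanishing just outside it, with $\|\psi\|_{C^{r}}<\varepsilon/2$; using the zero-entropy (hence ``thin'') structure of the maximizing set of $F_{1}$ one chooses the support of $\psi$ small enough that $\int(F_{1}+\psi)\,d\mu_\Gamma>\int(F_{1}+\psi)\,d\nu$ for every invariant $\nu\ne\mu_\Gamma$. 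Then $G:=F_{1}+\psi$ has $\mu_\Gamma$ as its unique maximizing measure, $G\in\mathcal{O}$, and $\|F-G\|_{C^{r}}<\varepsilon$ --- the only caveat being the control of the $C^{r}$ norms, which I discuss next.

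The main obstacle --- and the point at which this programme is, as the abstract concedes, incomplete --- is precisely that quantitative $C^{r}$ control for $r\ge 2$. The sub-actions and the limit objects produced by the entropy-reduction iteration are genuinely only Lipschitz; smoothing them, and replacing Contreras's Lipschitz perturbations by smooth bumps performing the same task, is harmless for the $C^{0}$ and $C^{1}$ norms but costs a factor that diverges with the smoothing scale when measured in $C^{r}$ with $r\ge 2$. Since the entropy and shadowing steps force the relevant neighbourhoods, and hence the bump scales, to become arbitrarily small, keeping the total perturbation $C^{r}$-small uniformly in $r$ is not delivered by this approach; a sharper approximation scheme, or a genuinely $C^{r}$ form of the Ma\~{n}\'e lemma, appears to be needed.
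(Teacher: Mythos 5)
Your proposal correctly identifies the two-stage strategy (first drive the maximizing measure to zero entropy, then lock in a nearby periodic orbit) and honestly flags the same $C^r$-control obstruction for $r\geq 2$ that the paper itself concedes. However, inside each stage there is a genuine gap, and the second stage is not merely incompletely quantified but structurally at odds with how the paper's proof actually closes.

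On the entropy-reduction step: you propose to iterate $C^r$ bumps that lower $\tilde F$ strictly below $\beta(F)$ on a positive-entropy piece of the Aubry set ``without creating new near-maximizing behaviour,'' summing a family of perturbations. Nothing guarantees this iteration terminates or converges: each bump can relocate the maximizing set and produce a fresh positive-entropy Aubry set elsewhere, and no mechanism controls the number of iterations against the $\varepsilon/2$ budget. The paper does not iterate; it proves the Morris-type statement (Theorem \ref{ergzeroentr}) by Baire category, showing each $\mathcal{E}_\gamma$ open and dense, with density supplied by the quantitative Bressaud--Quas estimate $n^k\inf_{\mu\in\mathcal{M}^n(T)}\int d(x,K)\,d\mu\to0$, not by an ad hoc bump iteration.

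On the lock-in step: after producing a nearly maximizing periodic orbit $\Gamma$ by shadowing, you assert that a small $C^r$ bump $\psi$ supported near $\Gamma$ makes $\mu_\Gamma$ the strict unique maximizer, appealing to the ``thinness'' of a zero-entropy maximizing set. This is the heart of the matter and is not established. Since $\Gamma$ is chosen to shadow a segment of a $\mu$-typical orbit, $\Gamma$ lies arbitrarily close to $\mbox{supp}\,\mu$, and zero entropy does not forbid a continuum of invariant measures accumulating on $\Gamma$ whose $(F_1+\psi)$-integrals you do not control. The paper's Proposition \ref{periappro-1} requires much more: a periodic $\delta$-pseudo-orbit in $[\overline F=0]$ with at most $M$ jumps and minimal self-separation $\gamma_\delta\geq Q\delta$ for arbitrarily large $Q$, together with a calibrated perturbation $-\varepsilon g+h$ (with $\varepsilon\approx\sqrt\delta$) and the pre-orbit bookkeeping constants $\rho,\gamma_2,\gamma_3,\Gamma_1,\Gamma_2,a,b$; it is the separation estimate $\gamma_3\gg\delta$ that forces every calibrating pre-orbit into a $\rho$-tube of $\mathcal{O}(y)$ and hence pins its $\alpha$-limit. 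Crucially, the paper never exhibits such pseudo-orbits directly: density of $\bigcup_y\mathcal{U}_y$ is proved by contradiction, applying the contrapositive of Proposition \ref{periappro-1} with $M=2$ to bound the self-separation of two-jump pseudo-orbits and then invoking the Brin--Katok local entropy formula to force $h_\mu(T)\geq\frac{\lambda^{-1}}{\log Q}\log\sqrt2>0$, contradicting the zero-entropy choice of $F$. Your direct construction bypasses this entirely. Finally, your openness argument is more than the paper needs: it simply defines $\mathcal{O}=\bigcup_{y\in\mbox{\scriptsize Per}(T)}\mathcal{U}_y$ with $\mathcal{U}_y$ an interior, so openness is automatic and all the work is in density.
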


The rest is organized as follows. In Section \ref{pre-1}, some basic concepts and useful results are introduced. In Section \ref{pre-2}, a useful shadowing property is obtained for differentiable functions. In Section \ref{pre-3}, the entropy argument is used to verify the main result (Theorem \ref{entarg-3}). In Section \ref{pre-4}, a result of Morris is generalized for expanding map with differentiable functions (Theorem \ref{ergzeroentr}), which is used in the entropy argument in Section \ref{pre-3}.

\section{Preliminary}\label{pre-1}

In this section, some basic definitions and useful results are introduced.

\begin{definition} \label{expdef-1}\cite{Contreras2016}
Let $(X,d)$ be a compact metric space. A map $T:X\to X$ is called expanding, if $T$ is Lipschitz continuous and there are constant numbers $\ld\in(0,1)$ and $N_0\in\mathbb{N}$ such that for every point $x\in X$, there are a neighborhood $U_x\subset X$ of $x$  and continuous inverse branches $S_i$, $i=1,...,l_x$, $l_x\leq N_0$, of $T$ with disjoint images $S_i(U_x)$, such that
$T^{-1}(U_x)=\cup^{l_x}_{i=1}S_i(U_x)$, $T\circ S_i=I_{U_x}$ (the identity map restricted to $U_x$), and
\beqq
d(S_i(y),S_i(z))\leq\ld d(y,z)\ \forall y,z\in U_x.
\eeqq
\end{definition}

For $x\in X$ and $r\in\mathbb{R}^{+}$, set
\beqq
B(x,r):=\{w\in X:\ d(x,w)<r\}.
\eeqq

\begin{remark}\label{coverconst-1}
By the compactness of $X$, there is a finite subcover of $\{U_x\}_{x\in X}$ in the above definition. So, there exists a constant $e_0>0$ such that for every $x\in X$, there is some $U_y$ so that the ball $B(x,e_0)\subset U_y$.
\end{remark}

Consider a continuous map $T:X\to X$, the set of invariant measures $\mu\in \mathcal{M}(T)$ with respect to $T$ is given by
\beqq
\mathcal{M}(T)=\{\mu:\ \mu(T^{-1}(B))=\mu(B)\ \mbox{for any Borel subset}\ B\subset X\}.
\eeqq

The following norm is used on the space of Lipschitz functions on $X$, denoted by $\mbox{Lip}(X, \mathbb{R})$,
\beqq
\|f\|_{Lip}=\sup_{x\in X}|f(x)|+\sup_{x\neq y}\frac{|f(x)-f(y)|}{d(x,y)};
\eeqq
the following norm is used on the space of differentiable functions, denoted by $C^r(X, \mathbb{R})$,
\beqq
\|f\|_{C^r}=\sup_{x\in X}|f(x)|+\sum_{1\leq i\leq r}\sup_{x\in X}|f^{(i)}(x)|,
\eeqq
where $f^{(i)}$ is the $i$-th derivative of $f$; if $f:X\to\mathbb{R}$ is continuous, then
\beqq
\|f\|_0=\sup_{x\in X}|f(x)|.
\eeqq

For an observable $f:X\to\mathbb{R}$ with $f\in C^r(X,\mathbb{R})$,  the ergodic optimization is the study of the following problem:
\beqq
\sup_{\mu\in\mathcal{M}(T)}\int_{M}fd\mu.
\eeqq

\begin{remark}
It is evident that $C^r(X,\mathbb{R})\subset \mbox{Lip}(X,\mathbb{R})$. This fact will be used in the following discussions.
\end{remark}

\begin{definition}\cite{Contreras2016}
Given $F\in C^r(X,\mathbb{R})\subset\mbox{Lip}(X,\mathbb{R})$, the Lax operator for $F$ is
$\mathcal{L}_F:\mbox{Lip}(X,\mathbb{R})\to \mbox{Lip}(X,\mathbb{R})$:
\beqq
\mathcal{L}_F(u)(x):=\max_{y\in T^{-1}(x)}\{\al+F(y)+u(y)\},
\eeqq
where
\beqq
\al=\al(F)=-\max_{\mu\mathcal{M}(T)}\int F d\mu.
\eeqq
The set of maximizing measures for an observable function $F$ is
\beqq
\mathcal{M}_{max}(F)=\{\mu\in\mathcal{M}(T):\ \int Fd\mu=-\al(F)\}.
\eeqq
A calibrated sub-action for $F$ is a fixed point of the Lax operator $\mathcal{L}_F$.
\end{definition}

\begin{lemma}\label{supcontin-1}\cite[Lemma 2.1]{Contreras2016}
\begin{itemize}
\item[1.] For $u\in\mbox{Lip}(X,\mathbb{R})$, the Lipschitz constants satisfy
\beq
\mbox{Lip}(\mathcal{L}_F(u))\leq\ld(\mbox{Lip}(F)+\mbox{Lip}(u)).
\eeq
In particular, $\mathcal{L}_F(\mbox{Lip}(X,\mathbb{R}))\subset \mbox{Lip}(X,\mathbb{R})$.
\item[2.] If $\mathcal{L}_F(u)=u$, set
\beq\label{supmea-3}
\overline{F}:=F+u-u\circ T+\al(F),
\eeq
then, we have
\begin{itemize}
\item[(i)] $\al(\overline{F})=-\max_{\mu\in\mathcal{M}(T)}\int\overline{F}d\mu=0$;
\item[(ii)] $\overline{F}\leq0$;
\item[(iii)] $\mathcal{M}(F)=\mathcal{M}(\overline{F})=\{T\ \mbox{invariant measures supported on}\ [\overline{F}=0]\}$.
\end{itemize}
\item[3.] If $u\in\mbox{Lip}(X,\mathbb{R})$ and $\be\in\mathbb{R}$ satisfy $\mathcal{L}_F(u)=u+\be$, then $\be=0$.
\end{itemize}
\end{lemma}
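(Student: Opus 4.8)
The plan is to treat the three items in order; each follows directly from the variational definition of $\mathcal{L}_F$ together with the uniform contraction of the inverse branches.

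For item 1, I would estimate $\mathcal{L}_F(u)$ on a pair of nearby points. Fix $x,x'\in X$ with $d(x,x')<e_0$ (the global Lipschitz bound then follows in the standard way from this local estimate). By Remark \ref{coverconst-1} both points lie in a common $U_z$, so $T^{-1}(x)=\{S_i(x):1\le i\le l_z\}$ and $T^{-1}(x')=\{S_i(x'):1\le i\le l_z\}$ are parametrized by the same branches. Assuming without loss of generality $\mathcal{L}_F(u)(x)\ge\mathcal{L}_F(u)(x')$, pick $i$ realizing the maximum at $x$, so $\mathcal{L}_F(u)(x)=\alpha+F(S_i(x))+u(S_i(x))$; since $S_i(x')\in T^{-1}(x')$ is an admissible competitor at $x'$,
\[
\mathcal{L}_F(u)(x)-\mathcal{L}_F(u)(x')\le\big(F(S_i(x))-F(S_i(x'))\big)+\big(u(S_i(x))-u(S_i(x'))\big)\le(\mbox{Lip}(F)+\mbox{Lip}(u))\,d(S_i(x),S_i(x')),
\]
and $d(S_i(x),S_i(x'))\le\lambda\,d(x,x')$ gives the bound; finiteness in particular shows $\mathcal{L}_F$ preserves $\mbox{Lip}(X,\mathbb{R})$.

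For item 2, the first observation is that $u-u\circ T$ is a coboundary, hence $\int(u-u\circ T)\,d\mu=0$ for every $\mu\in\mathcal{M}(T)$; therefore $\int\overline{F}\,d\mu=\int F\,d\mu+\alpha(F)$, from which $\max_\mu\int\overline{F}\,d\mu=-\alpha(F)+\alpha(F)=0$, proving (i), and which also shows a measure maximizes $\int\overline{F}$ exactly when it maximizes $\int F$, so $\mathcal{M}_{max}(\overline{F})=\mathcal{M}_{max}(F)$. For (ii), the fixed-point equation $u(x)=\max_{y\in T^{-1}(x)}\{\alpha+F(y)+u(y)\}$ applied with $x=T(z)$ and the competitor $y=z$ gives $u(T(z))\ge\alpha+F(z)+u(z)$ for every $z$, i.e. $\overline{F}(z)=F(z)+u(z)-u(T(z))+\alpha(F)\le0$. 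Finally, since $\overline{F}$ is continuous and $\overline{F}\le0$, an invariant measure $\mu$ has $\int\overline{F}\,d\mu=0$ if and only if it is carried by the closed set $[\overline{F}=0]$; combining this with (i) identifies $\mathcal{M}_{max}(\overline{F})$ with the set of invariant measures supported on $[\overline{F}=0]$, completing (iii).

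For item 3, suppose $\mathcal{L}_F(u)=u+\beta$. Arguing as in (ii), for every $z$ we get $u(T(z))+\beta\ge\alpha+F(z)+u(z)$, so $F(z)+u(z)-u(T(z))+\alpha\le\beta$; integrating against a maximizing measure $\mu_0$ for $F$ (so $\int F\,d\mu_0=-\alpha$) annihilates the coboundary and yields $0\le\beta$. For the reverse inequality I would construct a backward orbit realizing the maxima: since each $T^{-1}(x)$ is finite the maximum is attained, so starting from any $x_0$ one picks inductively $x_{n+1}\in T^{-1}(x_n)$ with $u(x_n)+\beta=\alpha+F(x_{n+1})+u(x_{n+1})$. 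Telescoping over $n=0,\dots,N-1$ gives $\beta=\alpha+\frac1N\sum_{n=1}^{N}F(x_n)+\frac{u(x_N)-u(x_0)}{N}$. The empirical measures $\mu_N:=\frac1N\sum_{n=1}^{N}\delta_{x_n}$ satisfy $T_*\mu_N=\mu_N+\frac1N(\delta_{x_0}-\delta_{x_N})$, so any weak-$*$ limit $\mu$ along a subsequence is $T$-invariant and $\int F\,d\mu=\lim\frac1N\sum_{n=1}^{N}F(x_n)$; since $u$ is bounded the remainder term vanishes, whence $\beta=\alpha+\int F\,d\mu\le\alpha+\max_\nu\int F\,d\nu=0$, and therefore $\beta=0$. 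I expect the only step needing genuine care to be this last construction—verifying that the almost-invariance of the $\mu_N$ indeed produces a $T$-invariant limit and that the maxima defining the backward orbit are legitimately attained—since everything else is routine bookkeeping with coboundaries.
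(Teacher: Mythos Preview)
Your argument is correct and is the standard proof of this lemma. The paper itself does not supply a proof but simply cites \cite[Lemma~2.1]{Contreras2016}, where essentially the same reasoning (branch-by-branch comparison for item~1, the coboundary identity for item~2, and a calibrating backward orbit plus weak-$*$ limit for item~3) is carried out.
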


\begin{proposition}\cite[Proposition 2.2]{Contreras2016}
There exists a Lipschitz calibrated sub-action.
\end{proposition}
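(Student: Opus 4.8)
The plan is to produce a fixed point of the Lax operator $\mathcal{L}_F$ by a Schauder fixed point argument on a compact convex family of Lipschitz functions, and then to remove a leftover additive constant using the rigidity in Lemma~\ref{supcontin-1}(3). First I would record the preliminary facts that make $\mathcal{L}_F$ well behaved: $\al(F)$ is finite, since $\mathcal{M}(T)\neq\emptyset$ and $\mu\mapsto\int F\,d\mu$ is weak-$*$ continuous on the compact set $\mathcal{M}(T)$; and for each $x$ the fibre $T^{-1}(x)$ is a nonempty finite set (at most $N_0$ preimages locally, finitely many globally by compactness), so the maximum defining $\mathcal{L}_F(u)(x)$ is attained. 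By Lemma~\ref{supcontin-1}(1), $\mathcal{L}_F$ maps $\mathrm{Lip}(X,\mathbb{R})$ into itself, and moreover it is non-expansive for the sup-norm: applying $\max_i a_i-\max_i b_i\le\max_i(a_i-b_i)$ to the finite index set $T^{-1}(x)$, with $a_y=\al+F(y)+u(y)$ and $b_y=\al+F(y)+v(y)$, yields $\|\mathcal{L}_F(u)-\mathcal{L}_F(v)\|_0\le\|u-v\|_0$.

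Next I would fix a base point $x_0\in X$, set $L:=\ld\,\mathrm{Lip}(F)/(1-\ld)$, which is finite because $F\in C^r(X,\mathbb{R})\subset\mathrm{Lip}(X,\mathbb{R})$ and $\ld\in(0,1)$, and consider
\beqq
\mathcal{K}:=\{u\in C(X,\mathbb{R}):\ \mathrm{Lip}(u)\leq L,\ u(x_0)=0\}.
\eeqq
This set is convex and, by the Arzel\`a--Ascoli theorem, compact in $(C(X,\mathbb{R}),\|\cdot\|_0)$: the bound $\mathrm{Lip}(u)\le L$ gives equicontinuity and, together with $u(x_0)=0$, the uniform bound $|u(x)|\le L\,\mathrm{diam}(X)$; closedness is immediate. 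I would then define $\Phi:\mathcal{K}\to C(X,\mathbb{R})$ by $\Phi(u):=\mathcal{L}_F(u)-\mathcal{L}_F(u)(x_0)$, so that $\Phi(u)(x_0)=0$, and check $\Phi(\mathcal{K})\subset\mathcal{K}$: by Lemma~\ref{supcontin-1}(1),
\beqq
\mathrm{Lip}(\Phi(u))=\mathrm{Lip}(\mathcal{L}_F(u))\le\ld\bigl(\mathrm{Lip}(F)+\mathrm{Lip}(u)\bigr)\le\ld\,\mathrm{Lip}(F)+\ld L=L
\eeqq
by the choice of $L$. Since $\mathcal{L}_F$ is non-expansive and evaluation at $x_0$ is continuous, $\Phi$ is continuous on $\mathcal{K}$, and Schauder's fixed point theorem furnishes $u\in\mathcal{K}$ with $\Phi(u)=u$, i.e. $\mathcal{L}_F(u)=u+\be$ with $\be:=\mathcal{L}_F(u)(x_0)\in\mathbb{R}$.

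To conclude I would invoke Lemma~\ref{supcontin-1}(3), which forces $\be=0$; hence $\mathcal{L}_F(u)=u$ with $u\in\mathrm{Lip}(X,\mathbb{R})$, the desired calibrated sub-action. I do not anticipate a serious obstacle: the quantitative content --- that $\mathcal{L}_F$ contracts Lipschitz constants by the factor $\ld$, and that $\mathcal{L}_F(u)=u+\be$ implies $\be=0$ --- is exactly Lemma~\ref{supcontin-1}(1) and (3), so what is left is the routine topological bookkeeping (convexity and $\|\cdot\|_0$-compactness of $\mathcal{K}$, continuity of $\Phi$) together with the harmless checks that $\al(F)$ is finite and that the fibres $T^{-1}(x)$ are finite and nonempty. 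One could alternatively avoid Schauder by studying the normalized iterates $v_n:=\mathcal{L}_F^n(0)-\mathcal{L}_F^n(0)(x_0)$, which all lie in $\mathcal{K}$ by Lemma~\ref{supcontin-1}(1), extracting a uniformly convergent subsequence and passing to the limit with the aid of the non-expansiveness of $\mathcal{L}_F$; but some extra care is needed to see the limit is genuinely fixed, so the Schauder argument is the one I would write out.
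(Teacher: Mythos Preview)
Your proposal is correct. The paper does not supply its own proof but cites \cite[Proposition 2.2]{Contreras2016} and, in the remark immediately following, singles out the invariant set $\{u:\mathrm{Lip}(u)\le \ld\,\mathrm{Lip}(F)/(1-\ld)\}$---precisely your constant $L$---so your Schauder argument on the normalized compact convex slice $\mathcal{K}$, followed by Lemma~\ref{supcontin-1}(3) to eliminate the additive constant, is exactly the intended route.
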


\begin{remark}
The Lax operator has an invariant subspace
\beq
\bigg\{u\in \mbox{Lip}(X,\mathbb{R}):\ \mbox{Lip}(u)\leq\frac{\ld\,\mbox{Lip}(F)}{1-\ld}\bigg\}.
\eeq
\end{remark}

\begin{definition}\cite{Contreras2016}
For a calibrated sub-action $u$, every point $z\in X$ has a calibrating pre-orbit $\{z_k\}_{k\leq0}$ such that $T(z_{-k})=z_{-k+1}$, $T^i(z_{-i})=z_0=z$, and
\beqq
u(z_{k+1})=u(z_k)+\al+F(z_k)\ \forall k\leq-1,
\eeqq
or
\beqq
\overline{F}(z_k)=0\ \forall k\leq-1.
\eeqq
Hence,
\beqq
u(z_0)=u(z_{-k})+k\al+\sum^{-1}_{i=k}F(z_i)\ \forall k\leq-1.
\eeqq
\end{definition}

\begin{definition}  \cite{Contreras2016}
Given $\de>0$, a sequence $\{x_n\}_{n\in\mathbb{N}}\subset X$ is said to be a $\de$-pseudo-orbit if $d(x_{n+1},T(x_n))\leq\de$ for any $n\in\mathbb{N}$.

Given $\ep>0$, we say that the orbit of a point $y\in X$ $\ep$-shadows a pseudo-orbit $\{x_n\}_{n\in\mathbb{N}}$ if $d(T^n(y),x_n)<\ep$ for any $n\in\mathbb{N}$.
\end{definition}

\begin{lemma}\label{maxval-21}\cite[Lemma 2.3]{Contreras2016}
If there exists a periodic orbit $\mathcal{O}(y)$ such that for any calibrated sub-action, the $\al$-limit of every calibrating pre-orbit is $\mathcal{O}(y)$, then every maximizing measure has support on $\mathcal{O}(y)$.
\end{lemma}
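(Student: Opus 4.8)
The plan is to reduce to ergodic maximizing measures and then, for each such measure, to exhibit a calibrating pre-orbit that equidistributes to it; the hypothesis will then force its support to coincide with $\mathcal{O}(y)$.

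First I would fix a Lipschitz calibrated sub-action $u$ (it exists by the proposition above) and set $\overline{F}:=F+u-u\circ T+\al(F)$. By Lemma~\ref{supcontin-1}(2), $\overline{F}\le 0$, and $\mathcal{M}_{max}(F)$ is exactly the set of $T$-invariant measures carried by $K:=[\overline{F}=0]$; since $u$ is continuous, so is $\overline{F}$, hence $K$ is closed and $\mathrm{supp}\,\mu\subset K$ for every $\mu\in\mathcal{M}_{max}(F)$. Because the ergodic components of a maximizing measure are themselves maximizing (their $F$-integrals are $\le-\al(F)$ with average $-\al(F)$, hence equal $-\al(F)$ almost surely), it suffices to show that every \emph{ergodic} $\mu\in\mathcal{M}_{max}(F)$ satisfies $\mathrm{supp}\,\mu\subset\mathcal{O}(y)$.

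The heart of the argument is to build, for an ergodic maximizing $\mu$, a calibrating pre-orbit equidistributing to $\mu$. I would pass to the natural extension $(\hat{X},\hat{T},\hat{\mu})$, whose points are backward orbits $\hat{x}=(x_0,x_{-1},x_{-2},\dots)$ with $T(x_{-k-1})=x_{-k}$, with $\hat{\mu}$ being $\hat{T}$-ergodic and each coordinate distributed as $\mu$. For $\hat{\mu}$-a.e.\ $\hat{x}$: (i) $x_{-k}\in\mathrm{supp}\,\mu\subset K$ for all $k\ge 0$ (a countable intersection of full-measure events), so $\overline{F}(x_{-k})=0$, i.e.\ $u(x_{-k+1})=u(x_{-k})+\al(F)+F(x_{-k})$ for all $k\ge 1$, which says precisely that $\{x_{-k}\}_{k\le 0}$ is a calibrating pre-orbit of $z:=x_0$ relative to $u$; and (ii) applying Birkhoff's theorem to $\hat{T}^{-1}$ and to the functions $\hat{x}\mapsto f(x_0)$ for $f$ in a countable dense subset of $C(X,\mathbb{R})$, the empirical measures $\frac{1}{K}\sum_{k=0}^{K-1}\delta_{x_{-k}}$ converge weakly-$*$ to $\mu$. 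Next I would identify the $\al$-limit of $\{x_{-k}\}$: it is contained in the closed set $\mathrm{supp}\,\mu$ because every $x_{-k}$ lies there; conversely, for $p\in\mathrm{supp}\,\mu$ and any neighbourhood $U$ of $p$ one has $\mu(U)>0$, so by the portmanteau inequality $\liminf_K\frac{1}{K}\#\{0\le k<K:x_{-k}\in U\}\ge\mu(U)>0$, whence $p$ is an accumulation point of $\{x_{-k}\}$; thus the $\al$-limit equals $\mathrm{supp}\,\mu$. By the hypothesis of the lemma, the $\al$-limit of this calibrating pre-orbit is $\mathcal{O}(y)$, so $\mathrm{supp}\,\mu=\mathcal{O}(y)$; as an invariant measure on a single periodic orbit is unique, $\mu$ is the periodic measure on $\mathcal{O}(y)$. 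Feeding this back through the ergodic-decomposition reduction shows every maximizing measure is this same periodic measure, in particular supported on $\mathcal{O}(y)$.

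I expect no serious obstacle beyond careful bookkeeping: one must verify that $\overline{F}$ is continuous (so $K$ is closed and contains $\mathrm{supp}\,\mu$), and that the backward orbit delivered by the natural extension is literally a calibrating pre-orbit in the sense of the Definition---a sequence of genuine $T$-preimages along which $\overline{F}$ vanishes. The one genuinely substantive point is the use of the natural extension to upgrade the forward equidistribution of a $\mu$-generic point into a calibrating \emph{pre}-orbit equidistributing to $\mu$; this is exactly what makes the $\al$-limit equal to all of $\mathrm{supp}\,\mu$, so that the hypothesis can pin $\mathrm{supp}\,\mu$ down.
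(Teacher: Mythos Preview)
The paper does not supply its own proof of this lemma; it simply cites \cite[Lemma~2.3]{Contreras2016}. Your argument via the natural extension is correct and is essentially Contreras' approach: pass to $(\hat X,\hat T,\hat\mu)$, observe that a $\hat\mu$-typical backward orbit stays in $\mathrm{supp}\,\mu\subset[\overline F=0]$ and is therefore a calibrating pre-orbit for the fixed sub-action $u$, then invoke the hypothesis to identify its $\alpha$-limit with $\mathcal O(y)$. The only difference is cosmetic: Contreras uses Poincar\'e recurrence for $\hat T^{-1}$ to conclude that $x_0$ itself lies in the $\alpha$-limit (hence $x_0\in\mathcal O(y)$ for $\mu$-a.e.\ $x_0$, giving $\mathrm{supp}\,\mu\subset\mathcal O(y)$ directly), whereas you invoke Birkhoff equidistribution to identify the entire $\alpha$-limit with $\mathrm{supp}\,\mu$. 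The recurrence version is slightly leaner---it does not require the preliminary reduction to ergodic $\mu$---but both routes are valid.
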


\begin{proposition}\label{supmea-4} \cite[Proposition 2.4]{Contreras2016} (Shadowing Lemma)
For a $\de$-pseudo-orbit $\{x_k\}_{k\in\mathbb{N}}$ with $\de<(1-\ld)e_0$, there is a point $y\in X$ $\ep$-shadowing $\{x_k\}_{k\in\mathbb{N}}$ with $\ep=\tfrac{\de}{1-\ld}$. Moreover, if $\{x_k\}_{k\in\mathbb{N}}$ is a periodic pseudo-orbit, then $y$ is a periodic orbit with the same period.
\end{proposition}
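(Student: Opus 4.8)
The plan is to build the shadowing point by the classical hyperbolic pull-back: drag the pseudo-orbit backwards through the $\ld$-contracting inverse branches of $T$, and then close it up --- by compactness in the general case, and by the Banach fixed point theorem in the periodic case.

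First I would record the geometry that is actually used. Put $\ep:=\tfrac{\de}{1-\ld}$ and $\ep':=\tfrac{\ld\de}{1-\ld}=\ld\ep$, so that $\ep'<\ep$, $\ld(\ep'+\de)=\ep'$, and $\ep'+\de=\ep<e_0$, this last inequality being exactly the hypothesis $\de<(1-\ld)e_0$. For each $x\in X$, applying Remark~\ref{coverconst-1} to the point $T(x)$ gives some $U_y\supseteq B(T(x),e_0)$ on which $T$ has inverse branches $S_i$ with pairwise disjoint images, $T\circ S_i=I$, and $\mbox{Lip}(S_i)\le\ld$; since $x\in T^{-1}(U_y)$, exactly one of these branches, call it $S^{(x)}$, satisfies $S^{(x)}(T(x))=x$. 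As $d(x_{k+1},T(x_k))\le\de$, any point within $\ep'$ of $x_{k+1}$ lies within $\ep'+\de=\ep<e_0$ of $T(x_k)$, hence in the domain of $S^{(x_k)}$; and, using $\ld(\ep'+\de)=\ep'$ together with $S^{(x_k)}(T(x_k))=x_k$, the branch $S^{(x_k)}$ maps $\{\,d(\cdot,x_{k+1})\le\ep'\,\}$ into $\{\,d(\cdot,x_k)\le\ep'\,\}$. These are the only expansion facts needed.

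For the general case I would fix $n$, set $y^{(n)}_n:=x_n$, and define $y^{(k)}_n:=S^{(x_k)}\big(y^{(k+1)}_n\big)$ for $k=n-1,\dots,0$; the preceding paragraph makes each step legitimate and yields, by downward induction, $d\big(T^k y^{(0)}_n,x_k\big)=d\big(y^{(k)}_n,x_k\big)\le\ep'$ for $0\le k\le n$. By compactness of $X$, a subsequence of $\{y^{(0)}_n\}_n$ converges to some $y$; for each fixed $k$ the bound holds for all large $n$, so continuity of $T^k$ gives $d(T^k y,x_k)\le\ep'<\ep$ for every $k$, i.e.\ $y$ $\ep$-shadows $\{x_k\}$. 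For the periodic case $x_{k+p}=x_k$, I would instead fix the period-$p$ cycle of branches and form $\psi:=S^{(x_0)}\circ S^{(x_1)}\circ\cdots\circ S^{(x_{p-1})}$; by the last assertion of the preceding paragraph, $\psi$ maps the closed --- hence complete --- ball $\overline{B(x_0,\ep')}$ into itself, while $\mbox{Lip}(\psi)\le\ld^{\,p}<1$, so $\psi$ has a unique fixed point $y$. Unwinding $\psi(y)=y$ shows $T^p y=y$ and $d(T^k y,x_k)\le\ep'<\ep$ for all $k$, so $y$ is a periodic orbit of period $p$ that $\ep$-shadows $\{x_k\}$.

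I expect no analytic obstacle: the contraction of the inverse branches does all the work, and the two closing arguments (compactness; Banach fixed point) are routine. The only point demanding attention is the bookkeeping that keeps every pull-back inside a ball on which an inverse branch is defined --- which is precisely where the quantitative hypothesis $\de<(1-\ld)e_0$ enters, through the identities $\ld(\ep'+\de)=\ep'$ and $\ep'+\de=\ep<e_0$ --- together with the elementary observations that $S^{(x)}(T(x))=x$ singles out the correct branch and that $\psi$ is genuinely a self-map of the closed ball rather than merely having bounded orbits in it.
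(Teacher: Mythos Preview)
The paper does not give its own proof of this proposition; it is simply quoted from \cite[Proposition~2.4]{Contreras2016}. Your argument is the standard one --- pull back along the $\ld$-contracting inverse branches, then close up by compactness (infinite case) or by the Banach fixed point theorem (periodic case) --- and it is correct; the quantitative bookkeeping $\ld(\ep'+\de)=\ep'$, $\ep'+\de=\ep<e_0$ is exactly what is needed to keep each pull-back inside the domain of the chosen branch, and the strict inequality $d(T^k y,x_k)\le\ep'<\ep$ follows because $\ep'=\ld\ep$. This is essentially the proof one finds in \cite{Contreras2016}.
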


\begin{corollary}\label{maxval-11}
If $T^p(y)=y$ and $\{z_k\}_{k\leq0}$ is a pre-orbit which $(1-\ld)e_0$-shadows the orbit
$\mathcal{O}(y)$ of $y$, that is, for any $k\leq0$, $T(z_k)=z_{k+1}$ and $d(z_k,T^{k\mod p}(y))<(1-\ld)e_0$, then the $\al$-limit of $\{z_k\}_{k\leq0}$ is $\mathcal{O}(y)$.
\end{corollary}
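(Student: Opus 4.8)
The plan is to prove that the pre-orbit approaches $\mathcal{O}(y)$ at a geometric rate, and then to read off the $\al$-limit from that. First I would fix notation: put $y_k:=T^{k\bmod p}(y)$ for $k\le 0$, so that $\{y_k\}_{k\le 0}=\mathcal{O}(y)$, $T(y_k)=y_{k+1}$, and $y_{k-p}=y_k$; by hypothesis $T(z_k)=z_{k+1}$ and $d(z_k,y_k)<(1-\ld)e_0$ for every $k\le 0$. I would also record one consequence of Definition~\ref{expdef-1} together with compactness of $X$: distinct $T$-preimages of any point are separated by a uniform positive amount, which, after shrinking the constant $e_0$ of Remark~\ref{coverconst-1} if necessary, may be taken to be at least $e_0$.

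The heart of the proof is the one-step contraction
\[
d(z_{k-1},y_{k-1})\le \ld\, d(z_k,y_k),\qquad k\le 0 .
\]
Since $d(z_k,y_k)<(1-\ld)e_0<e_0$, Remark~\ref{coverconst-1} gives a neighborhood $U_w$ as in Definition~\ref{expdef-1} with $B(y_k,e_0)\subset U_w$; in particular $y_k,z_k\in U_w$. Let $S_1,\dots,S_l$ be the inverse branches of $T$ on $U_w$, each $\ld$-Lipschitz and with pairwise disjoint images. Since $T(y_{k-1})=y_k\in U_w$ and $T(z_{k-1})=z_k\in U_w$, there are indices $i,j$ with $y_{k-1}=S_i(y_k)$ and $z_{k-1}=S_j(z_k)$, and it is enough to show $i=j$, because then $d(z_{k-1},y_{k-1})=d(S_i(z_k),S_i(y_k))\le\ld\, d(z_k,y_k)$. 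To get $i=j$, consider the $T$-preimage $S_i(z_k)$ of $z_k$: it satisfies $d(S_i(z_k),y_{k-1})=d(S_i(z_k),S_i(y_k))\le\ld\, d(z_k,y_k)<\ld(1-\ld)e_0$, so
\[
d(z_{k-1},S_i(z_k))\le d(z_{k-1},y_{k-1})+d(y_{k-1},S_i(z_k))<(1-\ld)e_0+\ld(1-\ld)e_0<e_0 ;
\]
hence $z_{k-1}$ and $S_i(z_k)$ are $T$-preimages of $z_k$ lying closer than $e_0$, and since distinct preimages would be at distance at least $e_0$ they must coincide, i.e.\ $z_{k-1}=S_i(z_k)$ and $j=i$.

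Iterating the contraction gives $d(z_{-n},y_{-n})\le\ld^{n}d(z_0,y_0)\to 0$ as $n\to\infty$. Now let $A$ be the $\al$-limit set of $\{z_k\}_{k\le 0}$, i.e.\ the set of subsequential limits of $z_k$ as $k\to-\infty$. If $w\in A$, say $z_{k_m}\to w$ with $k_m\to-\infty$, pass to a subsequence along which all $k_m$ lie in a single residue class $s$ modulo $p$; then $y_{k_m}=T^s(y)$, so $d(z_{k_m},T^s(y))=d(z_{k_m},y_{k_m})\to 0$ and $w=T^s(y)\in\mathcal{O}(y)$. Conversely, for each $s\in\{0,\dots,p-1\}$ the sequence $k_m:=s-mp$ has $y_{k_m}=T^s(y)$, so $z_{k_m}\to T^s(y)$ and $T^s(y)\in A$. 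Therefore $A=\mathcal{O}(y)$, which is the assertion.

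The step I expect to be the real obstacle is the index identification $i=j$, equivalently the uniform separation of distinct inverse-branch images used there. Definition~\ref{expdef-1} only gives disjointness of the images $S_i(U_x)$, and one has to upgrade this to a positive lower bound on their separation, uniform in $x$ — presumably by passing to a finite subcover and slightly shrinking each $U_x$ so that the closures of the branch images become disjoint compact sets — and then to arrange $e_0$ no larger than this bound. If adjusting $e_0$ is not permitted, the hypothesis should instead read that the pre-orbit $\eta$-shadows $\mathcal{O}(y)$ for a suitably small $\eta$, and everything above goes through verbatim with $e_0$ replaced by that separation scale.
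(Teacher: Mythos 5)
Your argument is the natural (and, I believe, intended) one: the paper states Corollary~\ref{maxval-11} without proof as a consequence of the shadowing setup, and your geometric-contraction plus residue-class computation of the $\al$-limit is exactly what one would write. The reduction to the one-step estimate $d(z_{k-1},y_{k-1})\le\ld\,d(z_k,y_k)$, the identification of branch indices, and the extraction of the $\al$-limit from $d(z_{-n},y_{-n})\le\ld^n d(z_0,y_0)\to0$ are all correct.

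The caveat you flag is real and worth recording. Definition~\ref{expdef-1} only gives disjointness of the branch images $S_i(U_x)$, and Remark~\ref{coverconst-1} defines $e_0$ purely as a Lebesgue number for the cover $\{U_x\}$; neither, as written, promises that distinct $T$-preimages of a point are $e_0$-separated. The uniform lower bound you invoke does follow from the definition by a compactness argument: if $a_n\ne b_n$, $T(a_n)=T(b_n)$, $d(a_n,b_n)\to0$, pass to a common limit $a$, take $U_w$ containing $c:=T(a)$, and note that eventually $a_n=S_i(T(a_n))$ and $b_n=S_j(T(a_n))$ with $i\ne j$ fixed, giving $S_i(c)=S_j(c)=a$, contradicting disjointness of $S_i(U_w)$ and $S_j(U_w)$. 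So a uniform separation constant $\rho_0>0$ exists. Whether $\rho_0\ge e_0$ is not automatic, but since $e_0$ can be shrunk at will in Remark~\ref{coverconst-1}, one should simply agree (as you do) to take $e_0\le\rho_0$ from the start. This is harmless for the application in Proposition~\ref{periappro-1}, where the shadowing radius $\rho$ fed into the corollary is made arbitrarily small with $\de$ anyway, so the slightly weakened hypothesis still applies.
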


\section{Shadowing property} \label{pre-2}

In this section, a useful shadowing property is derived for differentiable functions.

Let $y\in\mbox{Per}(T)=\cup_{p\in\mathbb{N}}\mbox{Fix}(T^{p})$ be a periodic point for $T$, $P_y$ be the set of differentiable functions $F\in C^r(X,\mathbb{R})$ so that there is a unique $F$-maximizing measure and it is supported on the periodic orbit of $y$, and $\mathcal{U}_y$ be the interior of $P_y$ in $C^r(X,\mathbb{R})$.

\begin{proposition}\label{periappro-1}
Let $F,u\in\mbox{Lip}(X,\mathbb{R})$ with $\mathcal{L}_F(u)=u$ and $\overline{F}=F+\al(F)+u-u\circ T$, where $\al(F)=-\max_{\mu\in\mathcal{M}(T)}\int Fd\mu$.

Suppose that there is $M\in\mathbb{N}^+$ such that for every $Q>1$ and $\de_0>0$, there exist $0<\de<\de_0$ and a $p(\de)$-pseudo-orbit $\{x^{\de}_k\}_{0\leq k\leq p(\de)-1}$ in $[\overline{F}=0]$ with at most $M$ jumps such that $\tfrac{\ga_{\de}}{\de}\geq Q$, where $\ga_{\de}:=\min_{0\leq i<j<p(\de)}d(x^{\de}_i,x^{\de}_j)$. Then $F$ is in the closure of $\bigcup\limits_{y\in Per(T)}\mathcal{U}_y$.
\end{proposition}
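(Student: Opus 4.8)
The plan is to construct, from the hypothesized sequence of short pseudo-orbits with well-separated vertices, a sequence of $C^r$ perturbations of $F$ that converges to $F$ in the $C^r$ norm and whose members lie in $\bigcup_{y\in\mathrm{Per}(T)}\mathcal{U}_y$. First I would fix $\de$ small and the associated $p(\de)$-pseudo-orbit $\{x^{\de}_k\}$ inside $[\overline{F}=0]$; by the Shadowing Lemma (Proposition \ref{supmea-4}), since it is a periodic pseudo-orbit with jump size controlled by $p(\de)<(1-\ld)e_0$ for $\de_0$ small, there is a genuine periodic point $y=y_{\de}$ of period $p(\de)$ whose orbit $\ep$-shadows it with $\ep=\tfrac{p(\de)}{1-\ld}$. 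The idea is then to add to $F$ a nonnegative $C^r$ bump function $\psi_{\de}$ that vanishes on (a neighborhood of) the orbit $\mathcal{O}(y_{\de})$ and is strictly positive away from a tube around it, scaled so small that $\|\psi_{\de}\|_{C^r}\to 0$; the perturbed function $F+\psi_{\de}$ should then have $\mathcal{O}(y_{\de})$ as its unique maximizing orbit, putting it in $\mathcal{U}_{y_{\de}}$.

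The key steps, in order, are: (1) use the separation $\ga_{\de}/\de\ge Q$ to choose a length scale $\rho_{\de}$ — comfortably smaller than $\ga_{\de}$ but comfortably larger than the shadowing error $\ep_{\de}=p(\de)/(1-\ld)$ and than $\de$ itself — so that the balls $B(x^{\de}_k,\rho_{\de})$ are pairwise disjoint and each contains the corresponding shadowing point $T^{k}(y_{\de})$; (2) build a $C^r$ function $\psi_{\de}\ge 0$ on $X$ which is $0$ on $\bigcup_k B(x^{\de}_k,\rho_{\de}/2)$ and equals a small constant $c_{\de}>0$ outside $\bigcup_k B(x^{\de}_k,\rho_{\de})$, with derivatives up to order $r$ bounded by $O(c_{\de}/\rho_{\de}^{r})$; (3) choose $c_{\de}=o(\rho_{\de}^{r})$ so that $\|\psi_{\de}\|_{C^r}\to 0$, whence $F+\psi_{\de}\to F$ in $C^r$; (4) verify that every $(F+\psi_{\de})$-maximizing measure is supported on $\mathcal{O}(y_{\de})$ — here one shows the time average of $F+\psi_{\de}$ along $\mathcal{O}(y_{\de})$ equals that of $\overline{F}$ (which is $0$) because the orbit stays in the zero set of $\psi_{\de}$ and $\overline{F}=0$ along the pseudo-orbit up to a small error, while any invariant measure not supported on $\mathcal{O}(y_{\de})$ must give positive mass to the region where $\psi_{\de}\ge c_{\de}$ or else be forced (by an argument tracking $\overline{F}<0$ off the shadowing tube) to have strictly smaller integral; (5) invoke Lemma \ref{maxval-21} together with Corollary \ref{maxval-11} to conclude uniqueness and that the maximizing measure sits on the periodic orbit, so $F+\psi_{\de}\in P_{y_{\de}}$, and since this holds stably under further small $C^r$ perturbations, $F+\psi_{\de}\in\mathcal{U}_{y_{\de}}$.

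The main obstacle — and, as the abstract candidly warns, the genuine gap — is step (2)–(3): making the bump $\psi_{\de}$ simultaneously flat enough on the shadowing tube, large enough off it, and small in $C^r$ norm. The width of the tube one must avoid is governed by the shadowing error $\ep_{\de}\sim p(\de)$, the separation one can exploit is $\ga_{\de}\gg Q\de$, but the number of derivatives costs a factor $\rho_{\de}^{-r}$: one needs $c_{\de}/\rho_{\de}^{r}\to 0$ while still $c_{\de}$ dominates the deficit of $\overline{F}$ relative to the perturbation budget. For $r=1$ the estimate $c_{\de}/\rho_{\de}\to 0$ can be arranged by taking $Q\to\infty$ fast enough, which is exactly the Lipschitz/$C^1$ control inherited from \cite{Contreras2016}; for $r\ge 2$ the quantitative relationship between $p(\de)$, $\de$, and $\ga_{\de}$ supplied by the hypothesis does not obviously beat the $\rho_{\de}^{-r}$ blow-up, and closing this would require either a sharper construction of the pseudo-orbits (smaller $M$, or $p(\de)$ growing more slowly relative to $\ga_{\de}/\de$) or a more clever approximation scheme than radial bump functions. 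I would therefore present steps (1), (4), (5) in full rigor, carry out step (2)–(3) under the additional quantitative assumption that $p(\de)^{r}=o(\ga_{\de}/\de)$ (or simply for $r=1$), and flag the higher-order case as the remaining difficulty.
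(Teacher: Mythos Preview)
Your overall strategy---shadow the pseudo-orbit by a genuine periodic orbit, perturb $F$ by a function that penalizes mass away from that orbit, then use the calibrated sub-action/pre-orbit machinery (Lemma~\ref{maxval-21} and Corollary~\ref{maxval-11}) to force every maximizing measure onto $\mathcal{O}(y_\de)$---is exactly the paper's approach, and you are right to isolate the $C^r$ control for $r\ge 2$ as the genuine gap. The paper also bakes in openness by carrying an arbitrary small $C^\infty$ perturbation $h$ through the whole estimate (so that an open ball around the constructed function lands in $P_{y}$), whereas you defer this to a separate ``stability'' remark in step~(5); that is a cosmetic difference only.

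There are, however, two concrete slips you should fix. First, the sign of your perturbation is backwards: if $\psi_\de\ge 0$ vanishes near $\mathcal{O}(y_\de)$ and equals $c_\de>0$ away from it, then $\int(F+\psi_\de)\,d\nu$ is \emph{increased} for measures $\nu$ living away from the orbit, which pushes maximizers away rather than toward $\mathcal{O}(y_\de)$. The paper subtracts $\ep g$, where $g$ is a $C^\infty$ approximation of $d(\cdot,\mathcal{O}(y))$, so the perturbation is negative off the orbit; you need $F-\psi_\de$, not $F+\psi_\de$. Second, you repeatedly write $p(\de)$ where you mean $\de$: the jump size of the pseudo-orbit is $\de$, not its period $p(\de)$, so the Shadowing Lemma gives $\ep_\de=\de/(1-\ld)$, not $p(\de)/(1-\ld)$. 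This propagates into your quantitative discussion: with the correct scaling $\ep_\de\sim\de$ and the paper's choice $\ep\approx\sqrt{\de}$, the Lipschitz norm of the perturbation is $\sim\sqrt{\de}\to 0$, and the obstruction for $r\ge 2$ is that the higher derivatives of the smooth approximant $g$ (or of your bump) are not controlled by the hypothesis $\ga_\de/\de\ge Q$---not any growth of $p(\de)$. Once those two points are corrected, your outline coincides with the paper's proof.
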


\begin{definition}  \cite{Contreras2016}
We say that $n_i$, $i=1,...,l$, $l\leq M$, are the jumps of $\{x^{\de}_k\}_{0\leq k\leq p(\de)-1}$, if $d(T(x_k),x_{k+1})=0$ for $k\in\{0,1,...,p(\de)-1\}\setminus\{n_1,...,n_l\}$.
\end{definition}

Given a positive constant $\ep(\approx\sqrt{\de})$, consider the following constants:
\beq\label{supmea-12}
K:=\max\bigg\{\frac{M\, \mbox{Lip}(\overline{F})}{(1-\ld)^2},\ \frac{\mbox{Lip}(\overline{F})+3}{1-\ld}\bigg\},
\eeq
\beqq
\rho:=\frac{3K\de}{\ep},
\eeqq
\beqq
\ga_{2}:=\ga_{\de}-\frac{2\de}{1-\ld},
\eeqq
\beqq
\ga_{3}:=\frac{\ga_2}{\mbox{Lip}(T)}-\ld \rho,
\eeqq
\beqq
\Ga_1:=\frac{\rho}{12p},
\eeqq
\beqq
\Ga_2:=\frac{K\de}{4p}.
\eeqq
Assume $\de$ are small enough such that the above constants are all positive, $\rho\ll e_0$, $\ga_3\gg \de$, $\Ga_1<1$, $\Ga_2<1$, and set
\beq
-a:=K\de+K\rho+2p\ep\Ga_1+2p\Ga_2-\ep\ga_3<2K\de+K\rho-\ep\ga_3<0
\eeq
and
\beq
-b:=-\ep\rho+\frac{K\de}{p}+2\ep\Ga_1+2\Ga_2<-3K\de+K\de+\frac{K\de}{2}+\frac{K\de}{2}=-K\de<0.
\eeq

Suppose $y$ is a periodic point with period $p$, which $\tfrac{\de}{1-\ld}$ shadows the pseudo-orbit $\{x_k\}$. Set
\beqq
\mathcal{O}(y):=\{T^{i}(y):\ i=0,1,...,p-1\}=\{y_0,y_1,...,y_{p-1}\},
\eeqq
and for any continuous function $G:X\to\mathbb{R}$, denote by
\beqq
\langle G\rangle(y):=\frac{1}{p}\sum^{p-1}_{i=0}G(T^{i}(y))
\eeqq
the average of the function $G$ along the periodic orbit.

\begin{lemma}\label{supmea-9}
Assume that $d(z,y_k)\leq\rho\ll e_0$. Choose $w_1\in T^{-1}(z)$ with $d(w_1,y_{k-1})<\ld\rho$. If $w_2\in T^{-1}(z)\setminus\{w_1\}$, then
\beqq
d(w_2,\mathcal{O}(y))\geq\ga_3=\frac{\ga_2}{\mbox{Lip}(T)}-\ld\rho\gg\de.
\eeqq
\end{lemma}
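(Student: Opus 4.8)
The plan is to exploit the local inverse-branch structure of $T$ twice: once to produce $w_1$ and to describe the fibre $T^{-1}(z)$, and once, at the very end, to separate the remaining preimages of $z$. First I would invoke Remark~\ref{coverconst-1}: since $d(z,y_k)\leq\rho\ll e_0$, the ball $B(y_k,e_0)$ --- and hence $z$ --- lies in some chart $U=U_{y'}$ on which $T$ has $\ld$-contracting inverse branches $S_1,\dots,S_l$ with pairwise disjoint images and $T^{-1}(U)=\cup_i S_i(U)$. Because $y_{k-1}\in T^{-1}(y_k)$ with $y_k\in U$, there is an index $i_0$ with $y_{k-1}=S_{i_0}(y_k)$; putting $w_1:=S_{i_0}(z)$ we get $T(w_1)=z$ and $d(w_1,y_{k-1})=d(S_{i_0}(z),S_{i_0}(y_k))\leq\ld\,d(z,y_k)\leq\ld\rho$, so this $w_1$ is the one in the statement. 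Moreover, since $z\in U$, every element of $T^{-1}(z)$ equals $S_i(z)$ for exactly one $i$ and these are pairwise distinct, so an arbitrary $w_2\in T^{-1}(z)\setminus\{w_1\}$ has the form $w_2=S_{i_1}(z)$ with $i_1\neq i_0$.

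Next I would argue by contradiction: suppose $d(w_2,y_m)<\ga_3$ for some $0\leq m\leq p-1$. Applying $T$ to the pair $w_2,y_m$, using $T(w_2)=z$ and $T(y_m)=y_{m+1}$ (indices mod $p$) together with the elementary bound $\mbox{Lip}(T)\geq\ld^{-1}$ (which follows from Definition~\ref{expdef-1}, since $d(y,z)=d(TS_iy,TS_iz)\leq\mbox{Lip}(T)\,d(S_iy,S_iz)\leq\mbox{Lip}(T)\ld\,d(y,z)$), one obtains
\beqq
d(z,y_{m+1})\leq\mbox{Lip}(T)\,d(w_2,y_m)<\mbox{Lip}(T)\ga_3=\ga_2-\mbox{Lip}(T)\ld\rho\leq\ga_2-\rho.
\eeqq
The key fact is that $\mathcal{O}(y)$ is $\ga_2$-separated: since $y$ $\tfrac{\de}{1-\ld}$-shadows $\{x_k\}$, for $i\not\equiv j\pmod p$ we have $d(y_i,y_j)\geq d(x_i,x_j)-\tfrac{2\de}{1-\ld}\geq\ga_\de-\tfrac{2\de}{1-\ld}=\ga_2$. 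Hence, combining the displayed inequality with $d(z,y_k)\leq\rho$, we get $d(y_k,y_{m+1})\leq d(y_k,z)+d(z,y_{m+1})<\rho+(\ga_2-\rho)=\ga_2$, which forces $y_{m+1}=y_k$, i.e.\ $m\equiv k-1\pmod p$ and $y_m=y_{k-1}$.

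It remains to exclude the case $y_m=y_{k-1}$, and this is the step that really carries the content. Here $d(w_2,y_{k-1})<\ga_3$ while $d(w_1,y_{k-1})\leq\ld\rho$, so $d(w_1,w_2)<\ld\rho+\ga_3=\ga_2/\mbox{Lip}(T)$. On the other hand $w_1\neq w_2$ are two distinct $T$-preimages of $z$, and $z$ lies in the ball $B(y_k,e_0)$; I would then use that $T$ has a uniform injectivity radius, i.e.\ that there is a constant $e_1>0$ (depending only on $T$, obtained from the disjointness of the sets $S_i(U)$ together with the compactness of $X$ and the finiteness of the subcover of Remark~\ref{coverconst-1}) such that any two distinct preimages of a point lying in an $e_0$-ball are at distance $\geq e_1$. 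Provided $\de$ --- hence all the constants of \eqref{supmea-12} and the list following it --- is taken small enough that in addition $\ga_2/\mbox{Lip}(T)\leq e_1$, this contradicts $d(w_1,w_2)<\ga_2/\mbox{Lip}(T)$. Therefore $d(w_2,y_m)\geq\ga_3$ for every $m$, that is, $d(w_2,\mathcal{O}(y))\geq\ga_3$, which is the assertion.

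The robust heart of the argument is the push-forward-plus-separation step of the second paragraph; the part I expect to need the most care --- and where the expanding hypothesis is used beyond the bare existence of contracting inverse branches --- is the exclusion of $y_m=y_{k-1}$, in which one must rule out a second preimage of $z$ clustering near $y_{k-1}$. This is exactly why $\rho$ and $\ga_3$ are kept small relative to the metric geometry of $T$ in \eqref{supmea-12} and the constants defined after it.
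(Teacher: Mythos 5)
The paper does not actually give an argument for this lemma; the ``proof'' is a bare citation to a claim inside the proof of Proposition~2.6 of Contreras, so there is no text to compare your argument against line by line. Your two-case strategy --- push $w_2$ forward by $T$, use the $\gamma_2$-separation of $\mathcal{O}(y)$ to pin the nearest orbit point down to $y_{k-1}$, then rule out $y_{k-1}$ separately --- is the natural way to read the lemma, and your first case is correct as written (including the observation $\mathrm{Lip}(T)\lambda\geq 1$ and the shadowing estimate $d(y_i,y_j)>\gamma_\delta-\tfrac{2\delta}{1-\lambda}=\gamma_2$).

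The gap is in your exclusion of $y_m=y_{k-1}$, and it is more serious than the ``needs care'' caveat you attach to it. You want to finish by saying that $\delta$ small forces $\gamma_2/\mathrm{Lip}(T)\leq e_1$, ``hence all the constants of \eqref{supmea-12} and the list following it'' shrink. But $\gamma_2=\gamma_\delta-\tfrac{2\delta}{1-\lambda}$, and $\gamma_\delta=\min_{i<j<p(\delta)}d(x^\delta_i,x^\delta_j)$ is a property of whichever pseudo-orbit is handed to us; the hypothesis of Proposition~\ref{periappro-1} only bounds it \emph{below} by $Q\delta$, never above. So shrinking $\delta$ does not shrink $\gamma_2$ --- a two-point pseudo-orbit $\{x_0,T(x_0)\}$ with $d(T^2(x_0),x_0)<\delta$ already has $\gamma_\delta=d(x_0,T(x_0))$ completely unrelated to $\delta$. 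Consequently the inequality $\gamma_2/\mathrm{Lip}(T)\leq e_1$ is an additional hypothesis, not something you can arrange. (Separately, the ``uniform injectivity radius'' $e_1$ itself is plausible --- one can get it by covering $X$ with finitely many balls $B(p_j,e_0/4)$, noting $\overline{B(p_j,e_0/2)}\subset U_{y_j'}$, and taking the minimum pairwise distance of the compact disjoint sets $S_i(\overline{B(p_j,e_0/2)})$ --- but this is a lemma you would need to state and prove, as the disjointness of the open images $S_i(U)$ alone gives no quantitative separation.)

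The cleaner way to close Case~2, and the one that avoids the spurious constraint entirely, is to compare $w_2=S_{i_1}(z)$ not with $w_1$ but with $S_{i_1}(y_k)$: since $d(S_{i_1}(z),S_{i_1}(y_k))\leq\lambda\rho$, the triangle inequality gives $d\bigl(S_{i_1}(y_k),y_{k-1}\bigr)\leq d(w_2,y_{k-1})+\lambda\rho$, where $S_{i_1}(y_k)$ and $y_{k-1}=S_{i_0}(y_k)$ are two distinct preimages of the \emph{same} point $y_k$. Then the needed lower bound is on $d(S_{i_1}(y_k),S_{i_0}(y_k))$, which is exactly the preimage-separation quantity; if that is $\geq e_1$ one gets $d(w_2,\mathcal{O}(y))\geq e_1-\lambda\rho$, and the genuine content of the lemma is that one can take $\gamma_3$ to be $\min\{\gamma_2/\mathrm{Lip}(T)-\lambda\rho,\ e_1-\lambda\rho\}$ (with the subsequent estimates of Section~\ref{pre-2} unaffected, since there one only needs $\gamma_3\gg\delta$, not the particular formula for $\gamma_3$). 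As written, your proof asserts the stronger bound $\gamma_2/\mathrm{Lip}(T)-\lambda\rho$ and leans on an impossible constraint to justify it; that step needs to be repaired.
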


\begin{proof}
This is a claim verified in the proof of \cite[Proposition 2.6]{Contreras2016}.
\end{proof}

\begin{proof}
In the concept of $\overline{F}$ in \eqref{supmea-3}, the calibrated sub-action $u$ is Lipschitz continuous, but it might not be differentiable. By  2. (iii) of Lemma \ref{supcontin-1}, the maximizing measures for $F$ and $\overline{F}$ are the same.

By Proposition \ref{supmea-4}, we have
\begin{align*}
&\bigg|\sum^{n_i}_{n_{i-1}+1}\overline{F}(y_k)-\sum^{n_i}_{n_{i-1}+1}\overline{F}(x_k)\bigg|
\leq\sum^{n_i}_{n_{i-1}+1}\mbox{Lip}(\overline{F})d(y_k,x_k)\\
\leq&\mbox{Lip}(\overline{F})\sum^{n_i-n_{i-1}}_{l=1}\ld^{k-1}\frac{\de}{1-\ld}\leq\frac{\mbox{Lip}(\overline{F})}{(1-\ld)^2}\de.
\end{align*}
This, together with the assumptions $\overline{F}(x_k)=0$, implies that $\sum^{p-1}_{0}\overline{F}(x_k)=0$, and
\beqq
\sum^{p-1}_{k=0}\overline{F}(y_k)\geq-\frac{M\, \mbox{Lip}(\overline{F})}{(1-\ld)^2}\de\geq-K\de,
\eeqq
or
\beq\label{supmea-5}
\langle\overline{F}\rangle(y)\geq-\frac{K\de}{p}.
\eeq
We make two perturbations to $\overline{F}$.  It follows from \cite{AzagraFerrera2007} that there is a $C^{\infty}$ function $g$ such that
 \beq\label{supmea-6}
 \|g-d(\cdot,\mathcal{O}(y))\|_0<\Gamma_{1}\ \mbox{and}\ \mbox{Lip}(g)<\mbox{Lip}(d(\cdot,\mathcal{O}(y)))+1\leq2,
 \eeq
where the distance function satisfies
\beq
\|d(\cdot,\mathcal{O}(y))\|_0\leq\mbox{diam} X\ \mbox{and}\ \mbox{Lip}(d(\cdot,\mathcal{O}(y)) )\leq1.
\eeq
The first perturbation is $-\ep g(x)$.
The second one is a perturbation by any function $h\in C^{\infty}$ with
\beq\label{supmea-7}
\|h\|_0<\Ga_2\ \mbox{and}\ \mbox{Lip}(h)<1.
\eeq

These perturbations depend on $\mathcal{O}(y)$ and  the period $p$. We will show that the function $G_1=\overline{F}-\ep g+h$ has a unique maximizing measure supported on the periodic orbit $\mathcal{O}(y)$, where such function $G_1$ contains an open ball centered at $\overline{F}-\ep g$. Note that $\overline{F}=F+\al(F)+u-u\circ T$.

Denote by
\beq\label{supmea-11}
G:=\overline{F}-\ep g+h+\be=G_1+\be,
\eeq
where
\beq\label{maxval-1}
\be=-\sup_{\mu\in\mathcal{M}(T)}\int(\overline{F}-\ep g+h)d\mu.
\eeq
It is evident that $G$ and $G_1$ have the same maximizing measures.

By \eqref{supmea-5}, \eqref{supmea-6}, and \eqref{supmea-7}, one has
\begin{align}\label{supmea-8}
\be\leq&-\langle\overline{F}-\ep g+h\rangle(y)=-\langle\overline{F}-\ep d(\cdot,\mathcal{O}(y))+\ep d(\cdot,\mathcal{O}(y))-\ep g +h\rangle(y)\nonumber\\
=&-\langle\overline{F}\rangle(y)+\ep\Ga_1+\|h\|_0\nonumber\\
\leq&\frac{K\de}{p}+\ep\Ga_1+\Ga_2.
\end{align}
Let $v$ be a calibrated sub-action for $G$, that is, $\mathcal{L}_G(v)=v$. Given any $z\in X$, let $\{z_k\}_{k\leq0}$ be a pre-orbit of $z$ calibrating $v$. Denote by $0>t_1>t_2>\cdots$ the times on which $d(z_k,\mathcal{O}(y))>\rho$.

If $t_{n+1}<t_n-1$, there is $s_n\in\mathbb{Z}$ such that the orbit segment $\{z_k\}^{t_n-1}_{k=t_{n+1}+1}$ $\rho$-shadows $\{y_{-i+s_n}\}^1_{i=t_n-t_{n+1}-1}$, then one has
\beqq
d(z_{-i+t_n},y_{-i+s_n})\leq\ld^{i-1}\rho,\ \forall n\in\mathbb{N},\ \forall i=1,...,t_n-t_{n+1}-1.
\eeqq
By Lemma \ref{supmea-9}, for
$t_{n+1}<t_n-1$, we have
\beq\label{supmea-14}
d(z_{t_{n+1}},\mathcal{O}(y))\geq\ga_3.
\eeq
Since both terms in $\overline{F}$ and $-d(\cdot,\mathcal{O}(y))$ are non-positive, it follows from \eqref{supmea-7} and \eqref{supmea-8} that
\begin{align}\label{supmea-10}
G=&\overline{F}-\ep d(\cdot,\mathcal{O}(y))+\ep d(\cdot,\mathcal{O}(y))-\ep g +h+\be\nonumber\\
\leq& \ep\|d(\cdot,\mathcal{O}(y))-g\|_0+\|h\|_0+\be\leq\ep \Ga_1+\|h\|_0+\be\nonumber\\
\leq&\frac{K\de}{p}+2\ep\Ga_1+2\Ga_2.
\end{align}
On a shadowing segment, by \eqref{supmea-12}, \eqref{supmea-6}, \eqref{supmea-7}, \eqref{supmea-11}, we have
\beq\label{supmea-13}
\bigg|\sum^{t_n-1}_{t_{n+1}+1}G(z_k)-\sum^{s_n-1}_{s_n-t_n+t_{n+1}+1}G(y_k)\bigg|\leq
\mbox{Lip}(G)\sum^{+\infty}_{i=0}\ld^i\rho\leq\mbox{Lip}(G)\frac{\rho}{1-\ld}\leq K\rho.
\eeq
Let
\beqq
t_n-t_{n+1}-1=mp+r\ \mbox{with}\ 0\leq r<p,
\eeqq
 and separate the shadowing segment in $m$ loops along the orbit $\mathcal{O}(y)$.

 It follows from the definition of $\be$ and $y$ is a periodic orbit with period $p$ that $\langle G\rangle(y)\leq0$. Hence, from \eqref{supmea-10} and \eqref{supmea-13}, it follows that
\begin{align}\label{supmea-15}
&\sum^{t_n-1}_{t_{n+1}+1}G(z_k)\leq \sum^{s_n-1}_{s_n-t_n+t_{n+1}+1}G(y_k)+\mbox{Lip}(G)\frac{\rho}{1-\ld}\nonumber\\
\leq& mp\langle G\rangle(y)+(p-1)\bigg(\frac{K\de}{p}+2\ep\Ga_1+2\Ga_2\bigg)+
\mbox{Lip}(G)\frac{\rho}{1-\ld}\nonumber\\
\leq &(p-1)\bigg(\frac{K\de}{p}+2\ep\Ga_1+2\Ga_2\bigg)+
K\rho.
\end{align}

Since $d(z_{t_n},\mathcal{O}(y))>\rho$, using \eqref{supmea-8}, \eqref{supmea-6}, and \eqref{supmea-7},  we have
\begin{align}\label{maxvalu-2}
&G(z_{t_n})\leq
\overline{F}(z_{t_n})-\ep d(z_{t_n},\mathcal{O}(y))+\ep d(z_{t_n},\mathcal{O}(y))-\ep g(z_{t_n}) +h(z_{t_n})+\be\nonumber\\
\leq &\overline{F}(z_{t_n})-\ep \rho+\ep| d(z_{t_n},\mathcal{O}(y))- g(z_{t_n})|+ h(z_{t_n})+\be\nonumber\\
\leq&0-\ep\rho+\ep \Ga_1+\Ga_2+\frac{K\de}{p}+\ep\Ga_1+\Ga_2=-\ep\rho+\frac{K\de}{p}+2\ep\Ga_1+2\Ga_2<-b<0.
\end{align}

Similarly, for $t_{n+1}<t_n-1$, by \eqref{supmea-14}, one has
\begin{align}\label{supmea-16}
&G(z_{t_{n+1}})\leq
\overline{F}(z_{t_{n+1}})-\ep d(z_{t_{n+1}},\mathcal{O}(y))+\ep d(z_{t_{n+1}},\mathcal{O}(y))-\ep g(z_{t_{n+1}}) +h(z_{t_{n+1}})+\be\nonumber\\
\leq &\overline{F}(z_{t_{n+1}})-\ep \rho+\ep| d(z_{t_{n+1}},\mathcal{O}(y))- g(z_{t_{n+1}})|+ h(z_{t_{n+1}})+\be\nonumber\\
\leq&0-\ep\ga_3+\ep \Ga_1+\Ga_2+\frac{K\de}{p}+\ep\Ga_1+\Ga_2=-\ep\ga_3+\frac{K\de}{p}+2\ep\Ga_1+2\Ga_2.
\end{align}

For $t_{n+1}<t_n-1$, combining \eqref{supmea-15} and \eqref{supmea-16}, one has
\begin{align}\label{maxval-3}
&\sum^{t_n-1}_{t_{n+1}}G(z_k)\leq(p-1)\bigg(\frac{K\de}{p}+2\ep\Ga_1+2\Ga_2\bigg)+
K\rho-\ep\ga_3+\frac{K\de}{p}+2\ep\Ga_1+2\Ga_2\nonumber\\
=&K\de+K\rho+2p\ep\Ga_1+2p\Ga_2-\ep\ga_3<-a<0.
\end{align}

By \eqref{maxval-1}, $\al(G)=0$. Since $\{z_k\}_{k\leq0}$ is a calibrating pre-orbit for $v$, we have
\beqq
v(z)=v(z_k)+\sum^{-1}_{i=k+1}G(z_i)\ \forall k<0.
\eeqq
Since $v$ is finite, we have
\beqq
\sum^{-1}_{-\infty}G(z_k)\geq-2\|v\|_0>-\infty.
\eeqq
By \eqref{maxvalu-2} and \eqref{maxval-3}, the sequence $t_n$ is finite. Note that $\rho<(1-\ld)e_0$, it follows from Corollary \ref{maxval-11} that any calibrating pre-orbit has $\al$-limit $\mathcal{O}(y)$. This, together with Lemma \ref{maxval-21}, yields that every maximizing measure for $G$ has support on $\mathcal{O}(y)$.
\end{proof}

\section{Entropy argument}\label{pre-3}

In this section,  the entropy argument is used to verify the main result, Theorem \ref{entarg-3}, that is, the set $\mathcal{O}=\cup_{y\in Per(T)}\mathcal{U}_y$ is open and dense.

The difficult part is the proof of the denseness of this set. We will prove this by contradiction.

Suppose that there is a non-empty open set
\beqq
\mathcal{W}\subset C^r(X,\mathbb{R})
\eeqq
which is disjoint from $\cup_{y\in Per(T)}\mathcal{U}_y$. It follows from Theorem
\ref{ergzeroentr} and Remark \ref{ergzeroentropy} that there is $F\in\mathcal{W}$ such that it has an ergodic maximizing measure $\mu$ with zero measure entropy
\beq\label{entarg-2}
h_{\mu}(T)=0.
\eeq
By 2. (iii) of Lemma \ref{supcontin-1}, $\mbox{supp}(\mu)\subset[\overline{F}=0]$ for any calibrating subaction $u$ for $F$, where $\overline{F}$ is specified in \eqref{supmea-3}.  Take $q\in\mbox{supp}(\mu)\subset[\overline{F}]$ satisfying
\beqq
\int fd\mu=\lim_{N\to\infty}\frac{1}{N}\sum^{N-1}_{i=0}f(T^i(q)),
\eeqq
where $f:X\to\mathbb{R}$ is a continuous function, and $q$ is called a generic point for $\mu$.

By the assumption that $F$ is not in the closure of $\cup_{y\in Per(T)}\mathcal{U}_y$ and Proposition \ref{periappro-1} with $M=2$, one has

{\bf Claim} There is $Q>1$ and $\de_0>0$ such that if $0<\de<\de_0$ and $\{x_k\}_{k\geq0}\subset\mathcal{O}(q)$ is a $p(\de)$-periodic $\de$-pseudo-orbit with at most $2$ jumps made with elements of the positive orbit of $q$, then
\beqq
\ga=\min_{1\leq i<j< p}d(x_i,x_j)<\frac{1}{2}Q\de.
\eeqq
Take $N_0$ satisfying that
\beqq
2Q^{-N_0}<\de_0.
\eeqq
Fix a point $w\in\mbox{supp}(\mu)$ satisfying Brin-Katok Theorem \cite{BrinKatok1983}, that is,
\beq\label{entarg-1}
h_{\mu}(T)=-\lim_{L\to\infty}\frac{1}{L}\log\mu(V(w,L,\ep)),
\eeq
where
\beqq
V(w,L,\ep)=\{x\in X:\ d(T^kx,T^kw)<\ep,\ \forall k=0,...,L\}
\eeqq
is the dynamical ball \cite{Bowen1975}. Since $T$ is expanding, we have
\beqq
V(w,L,\ep)=S_1\circ\cdots S_L(B(T^L(w),\ep)),
\eeqq
where $S_k$ is one branch of the inverse of $T$ satisfying that $S_k(T^k(w))=T^{k-1}(w)$. So,
\beq\label{enarg-9}
V(w,L,\ep)\subset B(w,\ld^L\ep).
\eeq

Given $N>N_0$, let $0\leq t^N_1<t^N_2<\cdots$ be all the $\frac{1}{2}Q^{-N}$ returns to $w$, that is,
\beqq
\{t^N_1,t^N_2,...\}=\{n\in\mathbb{N}:\ d(T^n(q),w)\leq\tfrac{1}{2}Q^{-N}\}.
\eeqq

\begin{proposition}\label{entarg-6}\cite[Proposition 3.2]{Contreras2016}
For any $l\geq0$, one has
\beqq
t^{N}_{l+1}-t^N_{l}\geq\sqrt{2}^{N-N_0-1}.
\eeqq
\end{proposition}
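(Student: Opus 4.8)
The plan is to argue by contradiction, iterating the Claim above. Suppose that for some $l$ the gap $p:=t^N_{l+1}-t^N_l$ satisfies $p<\sqrt{2}^{N-N_0-1}$, and set $s:=t^N_l$. Both endpoints $T^s(q)$ and $T^{s+p}(q)$ lie in $B(w,\tfrac12 Q^{-N})$, so the finite orbit segment $x_k:=T^{s+k}(q)$, $0\leq k\leq p-1$, extended $p$-periodically, is a $p$-periodic $\de$-pseudo-orbit contained in $\mathcal{O}(q)$ carrying a single jump (at the index $p-1$), of size $\de\leq Q^{-N}$; moreover $\mathcal{O}(q)\subset\mbox{supp}(\mu)\subset[\overline{F}=0]$ by the forward invariance of $\mbox{supp}(\mu)$ together with part 2(iii) of Lemma \ref{supcontin-1}. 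Since $N>N_0$ gives $\de\leq Q^{-N}<Q^{-N_0}<\de_0$, the Claim applies and produces indices $1\leq i<j<p$ with $d(x_i,x_j)<\tfrac12 Q\de\leq\tfrac12 Q^{-(N-1)}$.

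Now I iterate. Cutting the loop at the positions $i$ and $j$ yields two shorter periodic pseudo-orbits inside $\mathcal{O}(q)$: the inner one $x_i,\dots,x_{j-1}$ of period $j-i$, and the outer one $x_j,\dots,x_{p-1},x_0,\dots,x_{i-1}$ of period $p-(j-i)$, the closing jumps created by the cut having size $d(x_i,x_j)<\tfrac12 Q^{-(N-1)}$. A pigeonhole argument on how the (at most two) jumps of the current loop distribute between the two arcs shows that at least one of the two sub-loops again carries at most two jumps, all of size at most $\tfrac12 Q^{-(N-1)}$, and one keeps the shorter admissible sub-loop. Repeating, one builds a nested family of periodic pseudo-orbits in $\mathcal{O}(q)$ whose periods $p=p_0>p_1>p_2>\cdots$ strictly decrease and whose jump sizes are bounded successively by $Q^{-N},Q^{-(N-1)},Q^{-(N-2)},\dots$. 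Because $2Q^{-N_0}<\de_0$, the Claim stays applicable as long as this scale remains below $\de_0$, i.e.\ for about $N-N_0$ steps, while the periods must stay $\geq 3$ for the Claim to be invoked at all. Balancing the number of admissible iterations against the (at least geometric) rate at which the periods shrink then forces $p\geq\sqrt{2}^{N-N_0-1}$, contradicting the hypothesis. Structurally this is the iteration carried out in the proof of \cite[Proposition 3.2]{Contreras2016}, with the Claim above (obtained here from the smooth version of Proposition \ref{periappro-1}) substituted for its Lipschitz analogue.

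The delicate point, and the main obstacle, is the simultaneous control along the iteration of three competing quantities: the number of jumps, which must not exceed $M=2$, so that at each cut one is sometimes forced to keep the jump-light arc even when it is the longer one; the resolution scale, which grows by a factor $Q$ per level and must be kept below $\de_0$, which is exactly what ties the admissible number of iterations to $N_0$ via $2Q^{-N_0}<\de_0$; and the period, whose decay rate against that number of iterations is what produces the base $\sqrt{2}$ rather than a larger or smaller constant. One must also dispose of the degenerate endgame in which some sub-loop shrinks to period at most $2$: such a loop is a periodic pseudo-orbit inside $[\overline{F}=0]$ with at most $M$ jumps, and its existence is incompatible with the standing assumption $F\notin\overline{\bigcup_{y\in\mbox{Per}(T)}\mathcal{U}_y}$ through the Shadowing Lemma (Proposition \ref{supmea-4}) and Proposition \ref{periappro-1}.
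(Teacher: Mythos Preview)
The paper does not supply its own proof of this proposition; it merely cites \cite[Proposition~3.2]{Contreras2016} and proceeds to use the bound. Your sketch is therefore being compared against Contreras's original argument, and structurally you have it right: form the closing pseudo-orbit from the return segment, iterate the Claim to cut at coarser and coarser scales, and play the number of admissible levels off against the rate at which the period shrinks.

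Two places in your write-up are too vague to count as a proof. First, the appearance of the base $\sqrt{2}$: you need the concrete observation that a \emph{non-halving} step can occur only from a two-jump loop in which both old jumps fall on the same side of the cut, and in that case the admissible surviving arc carries exactly one jump; the very next cut is then made from a one-jump loop and automatically halves. Hence non-halving steps are never consecutive, so among the first $k$ cuts at least $\lfloor k/2\rfloor$ halve the period, giving $p_k\le p_0\,2^{-\lfloor k/2\rfloor}$. Running the full $k=N-N_0$ levels permitted by the scale constraint and using $p_k\ge 1$ yields $p_0\ge 2^{\lfloor(N-N_0)/2\rfloor}\ge\sqrt{2}^{\,N-N_0-1}$.

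Second, your endgame is slightly mis-aimed. You do not need a separate appeal to Proposition~\ref{periappro-1} plus shadowing; the Claim itself already excludes periods $p\le 2$. For such $p$ the index set $\{1\le i<j<p\}$ is empty, so $\gamma=+\infty$, and the Claim's conclusion $\gamma<\tfrac12 Q\de$ is violated outright. Since the Claim is exactly the contrapositive of Proposition~\ref{periappro-1} under the standing hypothesis $F\notin\overline{\bigcup_y\mathcal{U}_y}$, reaching period $\le 2$ inside the iteration is simply impossible, and the iteration must exhaust all $N-N_0$ levels, which is what forces the lower bound. Your instinct that the short-period case is ``incompatible'' with the standing hypothesis is correct, but the mechanism is the Claim, not a fresh shadowing argument.
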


Choose $N\gg N_0$ and a continuous function $f_{N}:X\to\mathbb{R}$ satisfying $0\leq f_{N}\leq1$, $f_N|_{B\big(w,\tfrac{1}{2}Q^{-N-1}\big)}\equiv1$, and $\mbox{supp}(f_N)\subset B(w,\tfrac{1}{2}Q^{-N})$. So, by Proposition \ref{entarg-6}, one has
\begin{align}\label{enarg-10}
&\mu(B(w,\tfrac{1}{2}Q^{-N-1}))\nonumber\\
\leq&\int f_Nd\mu=\lim_{L\to+\infty}\frac{1}{L}\sum^{L-1}_{i=0}f_N(T^i(q))\nonumber\\
\leq&\lim_{L\to+\infty}\frac{1}{L}\#\{0\leq i<L:\ d(T^i(q),w)\leq\tfrac{Q^{-N}}{2}\}\nonumber\\
\leq&\lim_{L\to+\infty}\frac{1}{L}\#\{l:\ t^{N}_l\leq L\}\leq\sqrt{2}^{-N+N_0+1}.
\end{align}

Take a sufficiently large $N$ such that
\beqq
\frac{1}{2}Q^{-N-2}\leq\ld^L\ep\leq\frac{1}{2}Q^{-N-1},
\eeqq
so
\beqq
-N\leq L\frac{\log\ld}{\log Q}+\frac{\log(2\ep)}{\log Q}+2.
\eeqq
It follows from \eqref{enarg-9} and \eqref{enarg-10} that
\beqq
\mu(V(w,L,\ep))\leq\mu(B(w,\ld^L\ep))\leq\mu(B(w,\tfrac{1}{2}Q^{-N-1}))\leq\sqrt{2}^{-N+N_0+1},
\eeqq
and
\begin{align*}
&\frac{1}{L}\log\mu(V(w,L,\ep))\leq\frac{1}{L}\log(\sqrt{2})(-N+N_0+1)\\
\leq&\frac{\log\ld}{\log Q}\log(\sqrt{2})+\frac{1}{L}\log(\sqrt{2})\bigg(2+\frac{\log(2\ep)}{\log Q}+N_0+1\bigg).
\end{align*}
By \eqref{entarg-1}, one has
\beqq
h_{\mu}(T)=-\lim_{L\to+\infty}\frac{1}{L}\log\mu(V(w,L,\ep))\geq\frac{\ld^{-1}}{\log Q}\log\sqrt{2}>0,
\eeqq
this is a contradiction with \eqref{entarg-2}.

This completes the proof of the denseness of $\mathcal{O}=\cup_{y\in Per(T)}\mathcal{U}_y$. It is obvious that $\mathcal{O}$ is open.

Therefore, we finish the proof of Theorem \ref{entarg-3}.

\section{Zero entropy} \label{pre-4}

In this section, a result of Morris \cite{Morris2008} is generalized for expanding map with differentiable functions, which is used in the entropy argument in Section \ref{pre-3}.

\begin{theorem}\cite{Morris2008}
Let $X$ be a compact metric space and $T:X\to X$ be an expanding map. There is a residual set $\mathcal{G}\subset\mbox{Lip}(X,\mathbb{R})$ such that if $F\in\mathcal{G}$, then there is a unique $F$-maximizing measure and it has zero metric entropy.
\end{theorem}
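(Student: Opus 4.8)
The plan is to realize the desired residual set as a countable intersection of open dense sets, each of which kills non-uniqueness or positive entropy of the maximizing measure at a given scale. First I would recall the standard fact (going back to the Ma\~{n}\'{e}--Conze--Guivarc'h machinery, encoded here in Lemma~\ref{supcontin-1} and the subsequent propositions) that for a fixed $F$ and a fixed Lipschitz calibrated sub-action $u$, the maximizing set $\mathcal{M}_{max}(F)$ consists precisely of the invariant measures supported on the compact set $[\overline{F}=0]$, and that perturbing $F$ by $-\ep\, d(\cdot,C)$ for a suitable compact set $C$ shrinks this support toward $C$ (this is exactly the mechanism exploited in the proof of Proposition~\ref{periappro-1}). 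The key structural observation to formalize is \emph{upper semicontinuity}: the map $F\mapsto \mathcal{M}_{max}(F)$ is upper semicontinuous in the Hausdorff metric on the weak-$*$ compact set $\mathcal{M}(T)$, and $F\mapsto \mbox{supp}(\mu_{max})$ is likewise upper semicontinuous; moreover, by the variational principle for the topological pressure / by upper semicontinuity of metric entropy for expanding maps, $F\mapsto \max\{h_\mu(T): \mu\in\mathcal{M}_{max}(F)\}$ is upper semicontinuous as well.

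Next I would set up the open dense sets. For each $n$, let $\mathcal{G}_n^{(1)}$ be the set of $F\in\mbox{Lip}(X,\mathbb{R})$ for which $\mathcal{M}_{max}(F)$ has diameter less than $1/n$ in a fixed metric inducing the weak-$*$ topology; by the upper semicontinuity above this set is open, and it is dense because, given any $F$, one may pick $\mu\in\mathcal{M}_{max}(F)$, let $C=\mbox{supp}(\mu)$, and replace $F$ by $\overline{F}-\ep\, d(\cdot,C)$ for small $\ep>0$; since $d(\cdot,C)\le 0$ vanishes exactly on $C$, every maximizing measure of the perturbation is supported in an arbitrarily small neighborhood of $C$, forcing the diameter of $\mathcal{M}_{max}$ to shrink. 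Intersecting over $n$ gives uniqueness: $\bigcap_n \mathcal{G}_n^{(1)}$ is the set of $F$ with a single maximizing measure. For entropy, for each $n$ let $\mathcal{G}_n^{(2)}$ be the set of $F$ with $\max\{h_\mu(T):\mu\in\mathcal{M}_{max}(F)\}<1/n$; again this is open by upper semicontinuity of the entropy functional restricted to the (upper semicontinuously varying) maximizing set. Density of $\mathcal{G}_n^{(2)}$ is the substantive point: given $F$, pick any $\mu\in\mathcal{M}_{max}(F)$; I would use the fact that $\mbox{supp}(\mu)$ contains a point $q$ whose forward orbit is dense in $\mbox{supp}(\mu)$, approximate that orbit by a long periodic pseudo-orbit, shadow it by a genuine periodic orbit $\mathcal{O}(y)$ via Proposition~\ref{supmea-4}, and perturb by $-\ep\, d(\cdot,\mathcal{O}(y))$; for suitably chosen $\ep,\de$ the new maximizing measure is forced onto $\mathcal{O}(y)$ (this is precisely the conclusion of Proposition~\ref{periappro-1} applied with $M=1$), whose entropy is zero. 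Then $\mathcal{G}=\bigl(\bigcap_n\mathcal{G}_n^{(1)}\bigr)\cap\bigl(\bigcap_n\mathcal{G}_n^{(2)}\bigr)$ is residual, and every $F\in\mathcal{G}$ has a unique maximizing measure with zero entropy.

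The main obstacle I expect is verifying density of $\mathcal{G}_n^{(2)}$ rigorously, i.e. controlling the entropy of the \emph{new} maximizing measure after a small Lipschitz perturbation without accidentally reintroducing high-entropy maximizing measures living on the part of $[\overline{F}=0]$ that the perturbation did not see. The clean way around this is not to aim directly for zero entropy but to invoke Proposition~\ref{periappro-1} (with $M=1$): it already packages the shadowing-plus-perturbation argument and concludes that $F$ lies in the closure of $\bigcup_{y\in Per(T)}\mathcal{U}_y$, where each $\mathcal{U}_y$ consists of functions with a \emph{unique} maximizing measure supported on a periodic orbit (hence with zero entropy automatically). Thus it suffices to check that an arbitrary $F$ satisfies the hypothesis of Proposition~\ref{periappro-1} --- namely the existence, for every $Q>1$ and $\de_0>0$, of a periodic $\de$-pseudo-orbit in $[\overline{F}=0]$ with a bounded number of jumps and with minimal self-distance at least $Q\de$. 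This in turn follows from a pigeonhole/recurrence argument on a generic orbit in $\mbox{supp}(\mu)$: either short return times are already rare (in which case suitable pseudo-orbit pieces are well separated at scale $\de$), giving the estimate directly; or short returns are frequent, which by the dynamical-ball computation in Section~\ref{pre-3} (inequalities \eqref{enarg-9}--\eqref{enarg-10}) forces $h_\mu(T)>0$, contradicting the existence of a zero-entropy maximizing measure --- but that very contradiction is what we are trying to establish, so one instead runs the argument unconditionally: the existence of the required pseudo-orbit follows purely from compactness and the recurrence of $q$, and then Proposition~\ref{periappro-1} yields that $\bigcup_{y\in Per(T)}\mathcal{U}_y$ is dense, whence its (automatically residual) interior union, intersected with the uniqueness sets $\mathcal{G}_n^{(1)}$, furnishes $\mathcal{G}$.
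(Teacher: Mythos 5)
There is a genuine gap, and it concerns the density of your entropy-controlling sets $\mathcal{G}_n^{(2)}$.

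You propose to get density by invoking Proposition~\ref{periappro-1} with $M=1$, and you assert that its hypothesis --- the existence, for every $Q>1$ and $\de_0>0$, of a $\de$-pseudo-orbit in $[\overline{F}=0]$ with bounded jumps and minimal self-distance $\ge Q\de$ --- \emph{``follows purely from compactness and the recurrence of $q$.''} This is false, and it is where the circularity you flagged actually bites rather than dissolves. The hypothesis of Proposition~\ref{periappro-1} can fail: its negation is exactly the \textbf{Claim} in Section~\ref{pre-3}, where one assumes $F$ is not in the closure of $\bigcup_{y\in Per(T)}\mathcal{U}_y$ and deduces that every such pseudo-orbit has minimal self-distance $<\tfrac12 Q\de$. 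The paper then uses the Brin--Katok/dynamical-ball computation to conclude that the maximizing measure has positive entropy, and this contradicts Theorem~\ref{ergzeroentr} (the very result you are trying to prove). In other words, the logical order in the paper is: first establish the zero-entropy theorem, then feed it into the entropy argument of Section~\ref{pre-3} to force the hypothesis of Proposition~\ref{periappro-1} to hold generically. Your proposal reverses this, using Proposition~\ref{periappro-1} to establish the zero-entropy theorem, and the step supplying its hypothesis is precisely the missing content.

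The route the paper actually takes (in the $C^r$ case of Theorem~\ref{ergzeroentr}, which mirrors Morris's Lipschitz argument) is entirely different and does not touch Proposition~\ref{periappro-1}. It uses: (i) Jenkinson's Theorem~\ref{entarg-5} to get the residual uniqueness set; (ii) the Bressaud--Quas approximation theorem to produce periodic measures $\mu_n$ with $\int d(x,K)\,d\mu_n$ decaying superpolynomially, where $K$ is the support of the current maximizing measure; (iii) the concentration estimate \eqref{supmea-2} (Claim~4.5 of Contreras) saying that any invariant $\nu$ with $h_\nu(T)\ge 2\ga\,h_{top}(T)$ must put mass $>\ga$ at distance $\ge\tht^{m_n}$ from $L_n=\mathrm{supp}\,\mu_n$; and then (iv) a perturbation $f\mapsto f-\be\,d(\cdot,L_n)$ (smoothed in the $C^r$ case) which, by Lemma~\ref{enarg-200}, makes $\mu_n$ beat every high-entropy $\nu$. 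None of this is present in your sketch, and the pigeonhole/recurrence heuristic you offer as a substitute does not produce the needed separation estimate.

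A secondary issue: your proposed density argument for the uniqueness sets $\mathcal{G}_n^{(1)}$ is also not sound as stated. Replacing $F$ by $\overline{F}-\ep\,d(\cdot,C)$ with $C=\mathrm{supp}(\mu)$ forces maximizing measures of the perturbation to be supported in $C$, but it does not make $\mathcal{M}_{max}$ small in the weak-$*$ metric: $T|_C$ can carry a large simplex of invariant measures. Uniqueness on a residual set is obtained in the paper from Theorem~\ref{entarg-5}, whose proof (Ma\~{n}\'e/Jenkinson) rests on a Baire-category argument in the dual pairing of $C^0(X)$ with $\mathcal{M}(T)$, not on a support-shrinking perturbation. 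You should cite that theorem directly rather than re-derive it.
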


Inspired by this result, we show the following result:

\begin{theorem}\label{ergzeroentr}
Let $X$ be a compact metric space and $T:X\to X$ be an expanding map. There is a residual set $\mathcal{G}\subset C^r(X,\mathbb{R})$ ($r\in\mathbb{N}$) such that if $F\in\mathcal{G}$, then there is a unique $F$-maximizing measure and it has zero metric entropy.
\end{theorem}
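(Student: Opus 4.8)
The plan is to produce $\mathcal{G}$ as $\mathcal{G}=\mathcal{G}_{u}\cap\bigcap_{n\ge 1}\mathcal{G}_{n}$, where $\mathcal{G}_{u}=\{F:\#\mathcal{M}_{\max}(F)=1\}$ and $\mathcal{G}_{n}=\{F:h^{*}(F)<1/n\}$, with $h^{*}(F):=\max\{h_{\mu}(T):\mu\in\mathcal{M}_{\max}(F)\}$; every $F$ in the intersection then has a unique maximizing measure and it has zero metric entropy. The work divides into a soft topological part and one genuinely hard density statement.

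First I would record the soft facts. Since $T$ is expanding it is expansive, so $\mu\mapsto h_{\mu}(T)$ is upper semicontinuous on the compact metrizable space $\mathcal{M}(T)$; in particular the maximum defining $h^{*}$ is attained. The correspondence $F\mapsto\mathcal{M}_{\max}(F)$ is upper semicontinuous: if $F_{k}\to F$, $\mu_{k}\in\mathcal{M}_{\max}(F_{k})$ and $\mu_{k}\to\mu$, then $\int F\,d\mu=\lim_{k}\int F_{k}\,d\mu_{k}=\lim_{k}\beta(F_{k})=\beta(F)$, where $\beta(\cdot):=\max_{\nu}\int\cdot\,d\nu$ is $\|\cdot\|_{0}$-Lipschitz. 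Combining these (together with upper semicontinuity of entropy applied once more to a convergent subsequence of maximizing measures realising $h^{*}(F_k)$) shows $h^{*}$ is upper semicontinuous on $C^{r}(X,\mathbb{R})$, so each $\mathcal{G}_{n}$ is open; and $\mathcal{G}_{u}$ contains a residual set by the standard argument valid for any Baire space of observables separating $\mathcal{M}(T)$ (here $C^{\infty}\subset C^{r}(X,\mathbb{R})$ separates measures), cf.\ \cite{Jenkinson2019}. It therefore remains to show that each $\mathcal{G}_{n}$ is dense.

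For density I would fix $F$ and $\eta>0$ and assume $h^{*}(F)\ge 1/n$. By part~2.(iii) of Lemma~\ref{supcontin-1} the closed set $K:=[\overline F=0]$ carries precisely the $F$-maximizing measures, so it carries one of entropy $\ge1/n$. The only tool needed is the elementary observation that, for a fixed compact set $Z$ and any $c>0$, there is $s_{0}=s_{0}(c,Z)>0$ with $\int d(\cdot,Z)\,d\mu<s_{0}\Rightarrow h_{\mu}(T)<c$: the set $\{h_{\mu}(T)\ge c\}$ is compact by upper semicontinuity of entropy, $\mu\mapsto\int d(\cdot,Z)\,d\mu$ is continuous and vanishes exactly on the measures carried by $Z$, and such a measure has entropy at most $h_{\mathrm{top}}(T|_{Z})$. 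One then perturbs by $G:=F-\epsilon' g$, where $g\ge 0$ is a $C^{\infty}$ function (furnished by \cite{AzagraFerrera2007}) lying within $\gamma$ of $d(\cdot,Z)$ in $\|\cdot\|_{0}$, and $\gamma,\epsilon'>0$ and the compact set $Z$ are to be chosen; a short computation using $g\ge0$ and $\int F\,d\mu\le\beta(F)$ shows that every $G$-maximizer $\mu$ satisfies $\int d(\cdot,Z)\,d\mu\le 2\gamma+\big(\beta(F)-\min_{\mathrm{supp}\,\nu\subset Z}\int F\,d\nu\big)/\epsilon'$. If $K$ contains a compact invariant subset $Z$ with $h_{\mathrm{top}}(T|_{Z})<1/n$ — for instance a periodic orbit lying in $K$ — the second term vanishes, so taking $\gamma<\tfrac12 s_{0}(1/n,Z)$ and then $\epsilon'<\eta/\|g\|_{C^{r}}$ yields $h^{*}(G)<1/n$ and $\|G-F\|_{C^{r}}<\eta$, as required.

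The main obstacle is the remaining case, where $K$ has no such subset (e.g.\ its maximal invariant set is a minimal subsystem of positive entropy). Then one must take $Z=\mathcal{O}$ an \emph{exterior} periodic orbit, chosen by density of periodic measures so that $\beta(F)-\int F\,d\delta_{\mathcal{O}}$ (with $\delta_{\mathcal{O}}$ the equidistributed measure on $\mathcal{O}$) is small; the second term above is now genuinely present, and one succeeds only if $\beta(F)-\int F\,d\delta_{\mathcal{O}}<\epsilon' s_{0}(1/n,\mathcal{O})$ (roughly). The trouble is that $\epsilon'$ (through $\|g\|_{C^{r}}$), the admissible $\gamma$, and $s_{0}(1/n,\mathcal{O})$ all shrink as $\mathcal{O}$ grows more complicated, so the choice of $\mathcal{O}$ must be orchestrated with care — closing up long orbit segments of a generic point of an $F$-maximizing measure by the shadowing Proposition~\ref{supmea-4}, and controlling the internal separation of $\mathcal{O}$ so that $s_{0}(1/n,\mathcal{O})$ stays bounded below along the construction; this is the heart of the argument in \cite{Morris2008}. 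By contrast, upgrading from Lipschitz to $C^{r}$ observables costs nothing here: since $\epsilon'$ may be taken arbitrarily small, the large higher derivatives of $g$ do no harm and merely force $\beta(F)-\int F\,d\delta_{\mathcal{O}}$ to be smaller, which density of periodic measures supplies for free — in sharp contrast to the situation in Theorem~\ref{entarg-3}. With each $\mathcal{G}_{n}$ open and dense and $\mathcal{G}_{u}$ residual, $\mathcal{G}$ is residual, which proves the theorem.
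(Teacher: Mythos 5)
Your overall architecture matches the paper's exactly: both decompose $\mathcal{G}$ as (unique maximizer) $\cap\,\bigcap_n$ (every maximizer has small entropy), prove openness of the latter sets by upper semicontinuity of $\mu\mapsto h_\mu(T)$ and of $F\mapsto\mathcal{M}_{\max}(F)$, and prove density by perturbing $F$ by $-\epsilon'$ times a smooth approximation of $d(\cdot,Z)$ for a well-chosen periodic orbit $Z$. Your openness argument is correct and is essentially the paper's Step~1.

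The gap is in the density step, and you have in effect flagged it yourself: you reduce to the case where $K=[\overline F=0]$ contains no periodic orbit, correctly observe that one must then take $Z=\mathcal{O}$ an exterior periodic orbit with $\beta(F)-\int F\,d\delta_{\mathcal{O}}<\epsilon'\,s_0(1/n,\mathcal{O})$, and then write that orchestrating $\mathcal{O}$, $s_0$, $\gamma$, $\epsilon'$ compatibly ``is the heart of the argument in \cite{Morris2008}'' --- without carrying it out. That is precisely the part the paper does supply: Step~3 picks $\mu_n$ with $\int d(\cdot,K)\,d\mu_n=o(\theta^{m_n})$ via the Bressaud--Quas superpolynomial approximation rate and fixes the scales $r_n$, $m_n=\lfloor r_n/2\rfloor$; Step~4 (Claim~4.5 of \cite{Contreras2016}) gives the uniform lower bound $\nu\{d(\cdot,L_n)\geq\theta^{m_n}\}>\gamma$ for every $\nu$ with $h_\nu(T)\geq 2\gamma h_{\mathrm{top}}(T)$, which plays the role of your $s_0$; Step~5 then closes the estimate. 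Your phrase ``controlling the internal separation of $\mathcal{O}$ so that $s_0$ stays bounded below'' is also not quite how the actual argument goes --- $s_0$ is \emph{not} bounded below uniformly in $n$, the argument wins because the inner scale $\theta^{r_n}$ is geometrically smaller than the outer scale $\theta^{m_n}$, with $r_n\approx 2m_n$.

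Separately, your concluding claim that ``upgrading from Lipschitz to $C^r$ observables costs nothing here: since $\epsilon'$ may be taken arbitrarily small, the large higher derivatives of $g$ do no harm'' is unsubstantiated and is exactly where the paper's own acknowledged gap sits. The smooth approximations of $d(\cdot,L_n)$ furnished by \cite{AzagraFerrera2007} come with a Lipschitz (i.e.\ $C^1$) bound $\operatorname{Lip}(\tilde f_n)\le 1+1/n$, but no control on $\|\tilde f_n\|_{C^r}$ for $r\ge 2$. If $\|\tilde f_n\|_{C^r}$ blows up (as it plausibly does, since $d(\cdot,L_n)$ has kinks near an increasingly crowded orbit), then the paper's fixed $\beta$, and your $\epsilon'$, must shrink with $n$, and it is then no longer automatic that Bressaud--Quas decay makes $\big(\beta(F)-\int F\,d\delta_{\mathcal{O}}\big)/\epsilon'$ go to zero --- one would need quantitative control of $\|\tilde f_n\|_{C^r}$ in terms of the geometry of $L_n$, which neither you nor the paper provides. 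In short, you correctly reproduce the strategy and the soft topology, but the hard quantitative step is deferred to Morris rather than carried out, and the sentence asserting $C^r$ is free is not only unproved but points directly at the open issue the paper itself concedes.
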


\begin{remark}\label{ergzeroentropy}
By Lemma \ref{supcontin-1}, the ergodic components of a maximizing measure are also maximizing. Hence, the unique maximizing measure in Theorem \ref{ergzeroentr} is ergodic, further, $T|_{supp(\mu)}$ is uniquely ergodic.
\end{remark}

The arguments below are motivated by the results in \cite{Contreras2016, Morris2008}.

\begin{lemma}\cite[Lemma 4.1]{Contreras2016}
Let $a_1,...,a_n$ be non-negative real numbers and $A=\sum^{n}_{i=1}a_i\geq0$, then
\beqq
\sum^n_{i=1}-a_i\log a_i\leq 1+A\log n,
\eeqq
where $0\log0=0$ is used for convenience.
\end{lemma}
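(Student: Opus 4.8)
The plan is to reduce the inequality to the elementary bound $\log u\le u-1$ for $u>0$ (with $\log$ the natural logarithm), applied termwise. First I would note that, by the convention $0\log 0=0$, the indices with $a_i=0$ contribute nothing to the left-hand side, so it suffices to estimate $\sum_{i:\,a_i>0}(-a_i\log a_i)$; observe that the number of such indices is at most $n$. For a fixed index $i$ with $a_i>0$, apply $\log u\le u-1$ with $u=\tfrac{1}{na_i}$ and multiply by $a_i>0$:
\beqq
a_i\log\frac{1}{na_i}\le\frac1n-a_i,\qquad\text{i.e.}\qquad -a_i\log a_i\le\frac1n-a_i+a_i\log n.
\eeqq
Summing this over the indices with $a_i>0$ gives
\beqq
\sum_{i=1}^{n}-a_i\log a_i\le\sum_{i:\,a_i>0}\frac1n-\sum_{i:\,a_i>0}a_i+(\log n)\sum_{i=1}^{n}a_i\le 1-A+A\log n\le 1+A\log n,
\eeqq
where I used $\#\{i:\,a_i>0\}\le n$ for the first sum, $\sum_i a_i=A$ for the others, and $A\ge0$ in the final step. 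This already completes the proof.

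Second, I would at least mention the alternative ``size times shape'' argument, since it is the one most readers will expect: if $A>0$, set $p_i:=a_i/A$, so that $(p_i)_{i=1}^{n}$ is a probability vector; then
\beqq
\sum_{i=1}^{n}-a_i\log a_i=-A\log A+A\sum_{i=1}^{n}-p_i\log p_i\le-A\log A+A\log n,
\eeqq
the inequality $\sum_i-p_i\log p_i\le\log n$ being Jensen's inequality for the concave function $\log$. One is then left to check $-A\log A\le1$, which holds because $t\mapsto-t\log t$ attains its maximum on $[0,\infty)$ at $t=1/e$ with value $1/e<1$; the case $A=0$ is trivial. I would keep the first argument as the proof proper, since it is self-contained, handles $A=0$ uniformly, and needs no case distinction.

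There is essentially no analytic difficulty in this statement; the only things that need care are bookkeeping. I would state explicitly that $\log$ is the natural logarithm, since the constant $1$ on the right-hand side is tied to that normalization (it originates from $\sum_{i:\,a_i>0}\tfrac1n\le1$ together with the coefficient in $\log u\le u-1$; in base $b$ the constant would be $1/\log b$). I would also flag that $\#\{i:\,a_i>0\}$ may be strictly smaller than $n$ — this only helps, since then $\sum_{i:\,a_i>0}\tfrac1n<1$ — and that $1-A$ may be negative when $A>1$, which is harmless for the stated upper bound. With those remarks in place the proof is only a few lines.
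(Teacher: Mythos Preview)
Your argument is correct. Both routes you give are valid: the termwise application of $\log u\le u-1$ with $u=\tfrac{1}{na_i}$ yields the bound in a single summation, and the ``size times shape'' decomposition via $p_i=a_i/A$ together with the Shannon-entropy bound $\sum_i -p_i\log p_i\le\log n$ and $\sup_{t\ge0}(-t\log t)=1/e<1$ also works. Note, however, that the present paper does not supply its own proof of this lemma at all --- it merely quotes the statement as \cite[Lemma~4.1]{Contreras2016} --- so there is nothing in the paper to compare your write-up against. Your first, self-contained proof is a perfectly acceptable replacement for the bare citation; the bookkeeping remarks you make (convention $0\log0=0$, natural logarithm, $\#\{i:a_i>0\}\le n$) are exactly the points that need to be stated.
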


\begin{lemma}\label{enarg-200}\cite[Lemma 4.2]{Contreras2016}
Let $f\in \mbox{Lip}(X,\mathbb{R})$ and suppose that $\mathcal{M}_{max}(f)=\{\mu\}$ for some $\mu\in\mathcal{M}(T)$. Then there is $C>0$ such that for every $\nu\in\mathcal{M}(T)$,
\beqq
-\al(f)-C\int d(x,K)d\nu\leq\int fd\nu,
\eeqq
where $K=\mbox{supp}\,\mu$.
\end{lemma}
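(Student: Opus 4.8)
The plan is to reduce everything to the calibrated sub-action of $f$ together with one elementary observation about Lipschitz functions. First I would invoke \cite[Proposition 2.2]{Contreras2016} to fix a Lipschitz calibrated sub-action $u$ for $f$, and set $\overline{f} := f + \al(f) + u - u\circ T$ as in \eqref{supmea-3}. By part 2 of Lemma \ref{supcontin-1}: item (ii) gives $\overline{f} \le 0$ on $X$, and item (iii) identifies $\mathcal{M}_{max}(f)$ with the invariant measures supported on $[\overline{f}=0]$; since $\mu$ is one of them, $K := \mbox{supp}\,\mu \subseteq [\overline{f}=0]$, so $\overline{f} \equiv 0$ on $K$. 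I would also record that $\overline{f}$ is Lipschitz: $f$ and $u$ are Lipschitz and $T$ is Lipschitz continuous, so $u\circ T$ is Lipschitz, whence $L := \mbox{Lip}(\overline{f}) \le \mbox{Lip}(f) + (1+\mbox{Lip}(T))\mbox{Lip}(u) < \infty$.

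The key step is the elementary remark that a Lipschitz function vanishing on a set $K$ is pointwise dominated by its Lipschitz constant times the distance to $K$: for any $x\in X$ and any $k\in K$ one has $-\overline{f}(x) = \overline{f}(k)-\overline{f}(x) \le L\,d(x,k)$, and taking the infimum over $k\in K$ yields $0 \le -\overline{f}(x) \le L\,d(x,K)$ for every $x\in X$.

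It then only remains to integrate and undo the definition of $\overline{f}$. For an arbitrary $\nu\in\mathcal{M}(T)$, invariance gives $\int (u\circ T)\,d\nu = \int u\,d\nu$, hence $\int\overline{f}\,d\nu = \int f\,d\nu + \al(f)$, i.e. $\int(-\overline{f})\,d\nu = -\al(f) - \int f\,d\nu$. Combining this with the pointwise bound integrated against $\nu$ gives $-\al(f) - \int f\,d\nu \le L\int d(x,K)\,d\nu$, which is exactly the assertion with $C = L = \mbox{Lip}(\overline{f})$; using the remark following Proposition 2.2 one may take $\mbox{Lip}(u)\le \ld\,\mbox{Lip}(f)/(1-\ld)$, so $C$ can be chosen to depend only on $f$ and $T$.

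I do not expect a genuine obstacle here; the only points deserving attention are that $\overline{f}$ really is Lipschitz — which is where the standing assumption that $T$ is Lipschitz continuous enters — and that $\mbox{supp}\,\mu\subseteq[\overline{f}=0]$, which is precisely part 2.(iii) of Lemma \ref{supcontin-1}. I note that uniqueness of the maximizing measure is not actually used in this estimate: it holds for the support of any maximizing measure, uniqueness being merely the setting in which the lemma is invoked later.
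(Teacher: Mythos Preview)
Your argument is correct and is exactly the standard proof of this inequality. Note, however, that the present paper does not supply its own proof of this lemma at all: it simply quotes it as \cite[Lemma~4.2]{Contreras2016}, so there is nothing in the paper to compare against beyond the citation. For the record, the proof in \cite{Contreras2016} proceeds precisely as you outline --- pass to $\overline{f}=f+\al(f)+u-u\circ T$ via a Lipschitz calibrated sub-action, use $\overline{f}\le 0$ and $\overline{f}|_K=0$ to get $|\overline{f}(x)|\le \mbox{Lip}(\overline{f})\,d(x,K)$, then integrate and cancel the coboundary by $T$-invariance of $\nu$ --- so your write-up matches both in method and in the resulting constant $C=\mbox{Lip}(\overline{f})$. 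Your closing remark that uniqueness of $\mu$ is not actually needed for the inequality (only $K\subset[\overline{f}=0]$) is also accurate.
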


For any $\ga\in\mathbb{R}^{+}$, denote by
\beqq
\mathcal{E}_{\ga}:=\{f\in C^r(X,\mathbb{R}):\ h_{\mu}(T)<2\ga\,  h_{top}(T)\ \forall\mu\in\mathcal{M}_{max}(f)\}.
\eeqq

\begin{theorem}\label{entarg-5}\cite{ContrLopesThieu2001, Jenkinson2006}
Let $T:X\to X$ be a continuous map of a compact metric space. Let $E$ be a topological vector space, which is densely and continuously embedded in $C^0(X,\mathbb{R})$. Let
\beqq
\mathcal{U}(E)=\{F\in E:\ \mbox{there is a unique}\ F-\mbox{maximizing measure}\}.
\eeqq
Then $\mathcal{U}(E)$ is a countable intersection of open and dense sets.

Moreover, if $E$ is a Baire space, then $\mathcal{U}(E)$ is dense in $E$.
\end{theorem}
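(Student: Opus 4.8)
The plan is to run the classical Baire--category scheme, the only substantive point being that the exposing perturbation must be produced \emph{inside} the possibly-small subspace $E$. Throughout, equip $\mathcal{M}(T)$ with a metric $\rho$ compatible with the weak-$*$ topology, and note that $\beta(F):=\max_{\mu\in\mathcal{M}(T)}\int F\,d\mu$ is $1$-Lipschitz (hence continuous) on $C^0(X,\mathbb{R})$, while $\mathcal{M}_{max}(F)=\{\mu\in\mathcal{M}(T):\int F\,d\mu=\beta(F)\}$ is a nonempty, compact, convex face of $\mathcal{M}(T)$.

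First I would prove upper semicontinuity of $F\mapsto\mathcal{M}_{max}(F)$: if $F_n\to F$ uniformly, $\mu_n\in\mathcal{M}_{max}(F_n)$ and $\mu_n\to\mu$ weak-$*$, then $\int F\,d\mu=\lim_n\int F_n\,d\mu_n=\lim_n\beta(F_n)=\beta(F)$, so $\mu\in\mathcal{M}_{max}(F)$. A routine compactness argument then shows that $D(F):=\operatorname{diam}_\rho\mathcal{M}_{max}(F)$ is upper semicontinuous on $C^0(X,\mathbb{R})$, so each set $\mathcal{O}_m:=\{F:D(F)<1/m\}$ is open there; since $E$ is continuously embedded in $C^0(X,\mathbb{R})$, each $\mathcal{O}_m\cap E$ is open in $E$. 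As $F$ has a unique maximizing measure exactly when $D(F)=0$, we obtain $\mathcal{U}(E)=\bigcap_{m\ge1}(\mathcal{O}_m\cap E)$, a countable intersection of open subsets of $E$.

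The hard part will be the density of each $\mathcal{O}_m\cap E$ in $E$ (which, once established, also gives the final assertion via the Baire category theorem). Fix $F\in E$, a neighbourhood $F+W$ with $W$ a balanced neighbourhood of $0$ in $E$, and $m\ge1$; put $\epsilon=1/(2m)$. By Krein--Milman, choose an extreme point $\mu_0$ of the face $\mathcal{M}_{max}(F)$; being extreme in a face of $\mathcal{M}(T)$, it is extreme in $\mathcal{M}(T)$ (i.e.\ ergodic). The set $C:=\mathcal{M}(T)\setminus B_\rho(\mu_0,\epsilon)$ is weak-$*$ compact and omits $\mu_0$, and $\overline{\operatorname{conv}}(C)$ is a compact convex subset of $\mathcal{M}(T)$; by Milman's theorem $\operatorname{ext}(\overline{\operatorname{conv}}(C))\subseteq C$, so the extreme point $\mu_0$ of the larger set $\mathcal{M}(T)$ cannot lie in $\overline{\operatorname{conv}}(C)$. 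Hahn--Banach separation in the locally convex space of signed measures then yields $\phi\in C^0(X,\mathbb{R})$ with $\int\phi\,d\mu_0>\sup_{\nu\in C}\int\phi\,d\nu$; replacing $\phi$ by a sufficiently close element of the dense subspace $E$ (and absorbing half the gap) we may take $\phi\in E$ and obtain a constant $c>0$ with $\int\phi\,d\mu_0\ge\int\phi\,d\nu+c$ whenever $\rho(\nu,\mu_0)\ge\epsilon$.

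To finish, for all small $t>0$ we have $t\phi\in W$, and I claim $\mathcal{M}_{max}(F+t\phi)\subseteq B_\rho(\mu_0,\epsilon)$, so that $D(F+t\phi)\le2\epsilon<1/m$, i.e.\ $F+t\phi\in(\mathcal{O}_m\cap E)\cap(F+W)$. Indeed, if not, there are $t_n\downarrow0$ and $\nu_n\in\mathcal{M}_{max}(F+t_n\phi)$ with $\rho(\nu_n,\mu_0)\ge\epsilon$; along a subsequence $\nu_n\to\nu$, upper semicontinuity gives $\nu\in\mathcal{M}_{max}(F)$, while the inequality $\int(F+t_n\phi)\,d\nu_n\ge\int(F+t_n\phi)\,d\mu_0$ combined with $\int F\,d\mu_0=\beta(F)\ge\int F\,d\nu_n$ forces $\int\phi\,d\nu_n\ge\int\phi\,d\mu_0$, contradicting the separation property of $\phi$. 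The main obstacle, as indicated, is precisely this density step: one must keep the separating perturbation inside the (a priori much smaller) space $E$, and this is where both the density of $E$ in $C^0(X,\mathbb{R})$ and the choice of an extreme maximizing measure $\mu_0$ (so that $\mu_0\notin\overline{\operatorname{conv}}(C)$ and a single hyperplane suffices) are indispensable; without density of $E$ the scheme breaks down.
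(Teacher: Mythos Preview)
The paper does not prove this theorem at all: it is quoted verbatim from \cite{ContrLopesThieu2001, Jenkinson2006} and used as a black box, so there is no in-paper argument to compare against. Your write-up is therefore a genuine addition rather than a comparison target.

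Your argument is correct. The upper semicontinuity of $F\mapsto\mathcal{M}_{max}(F)$ and the consequent upper semicontinuity of $D(F)=\operatorname{diam}_\rho\mathcal{M}_{max}(F)$ give the $G_\delta$ structure exactly as you say. For density, the chain
\[
\mu_0\ \text{ergodic}\ \Rightarrow\ \mu_0\in\operatorname{ext}\mathcal{M}(T)\ \Rightarrow\ \mu_0\notin\overline{\operatorname{conv}}(C)
\]
via Milman's partial converse is the key geometric step, and it is sound: if $\mu_0$ were in $\overline{\operatorname{conv}}(C)$ it would be extreme there (being extreme in the larger convex set $\mathcal{M}(T)$), hence in $\overline C=C$, a contradiction. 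The Hahn--Banach separation is legitimate because $\overline{\operatorname{conv}}(C)$ is a closed subset of the weak-$*$ compact set $\mathcal{M}(T)$, hence compact convex, and the weak-$*$ continuous linear functionals on $C(X)^*$ are precisely the evaluations $\mu\mapsto\int\phi\,d\mu$ with $\phi\in C(X)$. Passing from $\phi\in C(X)$ to $\phi\in E$ uses exactly the hypothesis that $E$ is dense in $C^0(X,\mathbb{R})$, and the contradiction in the final paragraph is clean: from $\nu_n\in\mathcal{M}_{max}(F+t_n\phi)$ and $\mu_0\in\mathcal{M}_{max}(F)$ you correctly extract $\int\phi\,d\nu_n\ge\int\phi\,d\mu_0$, which collides with the separation inequality.

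Compared with the proofs in the cited references, your route is closest in spirit to Jenkinson's: both exploit upper semicontinuity of the maximizing-measure correspondence to obtain the $G_\delta$ part. A common alternative for the density step is a one-parameter convexity argument (for fixed $g$, the map $t\mapsto\beta(F+tg)$ is convex, so its points of nondifferentiability are countable, and at differentiability the value $\int g\,d\mu$ is the same for all maximizing $\mu$; intersecting over a countable dense family $\{g_k\}\subset E$ then forces uniqueness on a residual set). Your Krein--Milman/Milman separation argument is a bit more geometric and has the advantage of producing, for each given $F$ and each $m$, an explicit direction $\phi\in E$ along which the perturbation works; the convexity argument is shorter but less constructive. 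Either approach needs the same two hypotheses you isolate: continuity of the embedding (for openness) and density of $E$ in $C^0$ (for density).
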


\begin{definition}\cite{Conway1990}
A topological vector space is a vector space together with a topology such that with this respect to this topology such that addition is continuous, and the scalar multiplication is also continuous.
\end{definition}

By Theorem \ref{entarg-5}, the set
\beqq
\mathcal{O}=\{f\in C^r(X,\mathbb{R}):\ \#\mathcal{M}_{max}(f)=1\}
\eeqq
is residual.

So, it suffices to show that $\mathcal{E}_{\ga}$ is open and dense for any $\ga>0$, implying that the set
\beqq
\mathcal{G}=\mathcal{O}\bigcap_{n\in\mathbb{N}}\mathcal{E}_{\tfrac{1}{n}}
\eeqq
satisfies the requirements of Theorem \ref{ergzeroentr}.

\begin{proof}
{\bf Step 1.} We show that $\mathcal{E}_{\ga}$ is open.

Let $f\in C^r(X,\mathbb{R})$, $f_n\in C^r(X,\mathbb{R})\setminus \mathcal{E}_{\ga}$ with $\lim_{n\to\infty}f_n=f$ in $C^r(X,\mathbb{R})$. So, there are $\nu_n\in\mathcal{M}_{max}(f_n)$ with $h(\nu_n)\geq2\ga\ h_{top}(T)$. By the compactness of the space $\mathcal{M}(T)$ in the weak star topology, we can assume that $\lim_{n\to\infty}\nu_n=\nu\in\mathcal{M}(T)$ in the weak star topology. So,
\beqq
\int fd\mu-\|f-f_n\|_{0}\leq\int f_n d\mu\leq \int f_nd\nu_n\leq\int f d\nu_n+\|f-f_n\|_{0},
\eeqq
implying that $\int fd\mu\leq\int fd\nu$ for any $\mu\in\mathcal{M}(T)$, that is, $\nu\in\mathcal{M}_{max}(T)$. It follows from the upper semicontinuity of $m\to h_m(T)$ that $h_{\nu}(T)\geq 2\ga\,h_{top}(T)$ \cite{Walters1982}. Hence, $f\in C^r(X,\mathbb{R})\setminus \mathcal{E}_{\ga}$, yielding that $C^r(X,\mathbb{R})\setminus \mathcal{E}_{\ga}$ is closed. Therefore, $\mathcal{E}_{\ga}$ is open.

{\bf Step 2.} We prove that $\mathcal{E}_{\ga}$ intersects every non-empty open set of $C^r(X,\mathbb{R})$.

Let $\mathcal{U}\subset C^r(X,\mathbb{R})$ be an open and non-empty subset. It follows from Theorem \ref{entarg-5} that there is $f\in\mathcal{U}$ such that $\mathcal{M}_{max}(f)$ contains only one element, denoted by $\mu$.

By the existence of Markov partitions of arbitrarily small diameter for expanding maps \cite{Ruelle2004}, there is a finite collection of sets $S_i\subset X$, a Markov partition, denoted by $\mathbb{P}$, satisfying that
\begin{itemize}
\item $\cup\, S_i=X$;
\item $\mbox{diam}\,\mathbb{P}=\max\{\mbox{diam}\,S_i\}<e_0$;
\item $S_i=\overline{\mbox{int}\, S_i}$;
\item $\mbox{int}\, S_i\cap\mbox{int}\, S_j=\emptyset$ for $i\neq j$;
\item $f(S_i)$ is a union of sets $S_j$.
\end{itemize}
Set
\beqq
\mathbb{P}^{(n)}:=\bigvee^{n-1}_{i=0}T^{-i}(\mathbb{P})=\bigg\{\cap^{n-1}_{i=0}A_i:\ A_i\in T^{-i}(\mathbb{P})\bigg\}.
\eeqq
The diameter of the elements of the partition $\mathbb{P}^{(n)}$ is less than $\ld^{n-1}e_0$, and this partition generates the Borel $\si$-algebra $\mathbb{P}^{\infty}=\si(\cup_n\mathbb{P}^{(n)})=Borel(X)$.

By \cite{Walters1982}, for every invariant measure $\nu\in\mathcal{M}(T)$, one has
\beqq
h_{\nu}(T)=\inf_{k}\frac{1}{k}\sum_{A\in\mathbb{P}^{(k)}}(-\nu(A)\log\nu(A)).
\eeqq

If $\mu$ is a periodic measure, then
\beqq
h_{\mu}(T)=\inf_{k}\frac{1}{k}\sum_{A\in\mathbb{P}^{(k)}}(-\mu(A)\log\mu(A))=0,
\eeqq
so, $f\in\mathcal{E}_{\ga}\cap\mathcal{U}$. Otherwise, if $\mu$ is not a periodic measure, it follows from 2. (iii) of Lemma \ref{supcontin-1} that any measure in $\mbox{supp}(\mu)$ is also a maximizing measure, implying that
\beqq
K=\mbox{supp}(\mu)
\eeqq
does not contain a periodic orbit. By Lemma \ref{enarg-200}, we have
\beq
-\al(f)-C\int d(x,K)d\nu\leq\int fd\nu\ \forall \nu\in\mathcal{M}(T).
\eeq

Note that $f\in\mathcal{U}\subset C^r(X,\mathbb{R})$, for any $g\in C^{\infty}(X,\mathbb{R})$, and $\be\in\mathbb{R}$, it is evident that if $|\be|$ is sufficiently small, then $f+\be g\in\mathcal{U}$.  By using this basic fact, we will construct a sequence of approximating functions $f_n\in\mathcal{U}\cap\mathcal{E}_{\ga}$ for large enough $n$.

{\bf Step 3.} We pick up a sequence of periodic orbits which will be used in the sequel.

Given any $\tht\in(0,1)$, there is a sequence of integers $\{m_n\}_{n\in\mathbb{N}}$ and a sequence of periodic measures $\mu_n\in\mathcal{M}(T)$ satisfying that
\beqq
\int d(x,K)d\mu_n=o(\tht^{m_n})\ \mbox{and}\ \lim_{n\to\infty}\frac{\log n}{m_n}=0.
\eeqq
By \cite[Corollary 3 and Theorem 4]{BressaudQuas2007}, for any given positive integer $k>0$, one has
\beq
\lim_{n\to\infty}n^k\bigg(\inf_{\mu\in\mathcal{M}^n(T)}\int d(x,K)d\mu\bigg)=0.
\eeq
Hence, there exists a sequence of periodic orbits $\mu_n\in\mathcal{M}^n(T)$ such that
\beqq
\lim_{n\to\infty}n^k\int d(x,K)d\mu_n=0.
\eeqq
Set
\beq\label{consest-1}
r_n:=\log_{\tht}\bigg(\int d(x,K)d\mu_n\bigg).
\eeq
It is evident that,  taking $\log_{\tht}$ on both sides of
\beqq
\tht^{r_n}\leq n^k\tht^{r_n}\leq1,
\eeqq
we have
\beqq
-\frac{1}{k}\leq\frac{\log_{\tht}n}{r_n}\leq0.
\eeqq
So, $\frac{\log_{\tht}n}{r_n}\to0$. Define
\beq \label{consest-2}
m_n=\bigg\lfloor\frac{r_n}{2}\bigg\rfloor,
\eeq
 where $\lfloor x\rfloor$ is the largest integer that is less than or equal to $x$. Hence, $\tfrac{\log_{\tht}n}{m_n}\to0$, and
\beq
\int d(x,K)d\mu_n=\tht^{r_n}\leq \tht^{m_n+\tfrac{1}{2}r_n}=o(\tht^{m_n}).
\eeq

{\bf Step 4.} We verify that there is $N_{\ga}>0$ such that when $n\geq N_{\ga}$
\beq\label{supmea-2}
\nu\{x\in X:\ d(x,L_n)\geq\tht^{m_n}\}>\ga
\eeq
for every invariant measure $\nu\in\mathcal{M}(T)$ with $h_{\nu}(T)\geq 2\ga h_{top}(T)$, where
\beq\label{supmea-1}
L_n:=\mbox{supp}(\mu_n),
\eeq
\beq
0<\tht<\min\{e_0,\ \ld,\ e_0\mbox{Lip}(T)^{-1}\},
\eeq
$\ld$ is introduced in Definition \ref{expdef-1}, and $e_0$ is specified in Remark \ref{coverconst-1}.

This is the Claim 4.5 in \cite{Contreras2016}.

{\bf Step 5.}  Define a sequence of functions $\{f_n\}_{n\geq1}\subset C^r(X,\mathbb{R})$.

For $L_n$ specified in \eqref{supmea-1}, the function $d(x,L_n)$ is Lipschitz with Lipschitz constant $1$.  By \cite{AzagraFerrera2007}, there is a $C^{\infty}$ function $\tilde{f}_n(x)$
satisfying that
\beqq
|d(x,L_n)-\tilde{f}_n(x)|<\ga\tht^{m_n+\tfrac{1}{2}r_n}\ \mbox{and}\ \mbox{Lip}(\tilde{f}_n)\leq 1+\frac{1}{n},
\eeqq
where $r_n$ and $m_n$ are introduced in \eqref{consest-1} and \eqref{consest-2}, respectively.

Define
\beqq
f_n(x)=f(x)-\be\tilde{f}_n(x),\ n\geq1,
\eeqq
where $\be$ is sufficiently small positive constant such that $f_n\in\mathcal{U}$ and $\mathcal{U}$ is specified in Step 2, since $f\in C^r(X,\mathbb{R})$, $\tilde{f}_n\in C^{\infty}(X,\mathbb{R})$, and $X$ is compact.

It follows from \eqref{supmea-2} that for sufficiently large $n$,
\beqq
\int d(x,L_n)d\nu\geq\tht^{m_n}\nu(\{x\in X:\ d(x,L_n)\geq\tht^{m_n}\})\geq\ga\tht^{m_n}
\eeqq
for all $\nu\in\mathcal{M}(T)$ with $h(\nu)\geq 2\ga\, h_{top}(T)$.

By Step 3,
\beqq
\int d(x,K)d\mu_n=\tht^{r_n}\leq \tht^{m_n+\tfrac{1}{2}r_n}.
\eeqq

So, we can choose sufficiently large $n$ such that
\beqq
\be\int d(x,L_n)d\nu-2\be\ga\tht^{m_n+\tfrac{1}{2}r_n}>C\int d(x,K)d\mu_n
\eeqq
for every $\nu\in\mathcal{M}(T)$ with $h(\nu)\geq 2\ga\,h_{top}(T)$.

Hence, one has
\begin{align*}
&\int f_nd\nu=\int (f-\be d(x,L_n))d\nu+\int(\be d(x,L_n)-\be\tilde{f}_n(x))d\nu\\
\leq& \int fd\nu-\be \int d(x,L_n)d\nu+\be\int|d(x,L_n)-\tilde{f}_n(x)|d\nu\\
\leq&-\al(f)-\be\int d(x,L_n)d\nu+\be\ga\tht^{m_n+\tfrac{1}{2}r_n}\\
<& -\al(f)-C\int d(x,K)d\mu_n-\be\ga\tht^{m_n+\tfrac{1}{2}r_n}\leq \int fd\mu_n-\be\ga\tht^{m_n+\tfrac{1}{2}r_n}\\
=& \int (f_n+\be\tilde{f}_n)d\mu_n-\be\ga\tht^{m_n+\tfrac{1}{2}r_n}\\
=& \int f_nd\mu_n+\int\be d(x,L_n)d\mu_n+\int(\be \tilde{f}_n-\be d(x,L_n))d\mu_n-\be\ga\tht^{m_n+\tfrac{1}{2}r_n}\\
\leq &  \int f_nd\mu_n+\be\int| \tilde{f}_n- d(x,L_n)|d\mu_n-\be\ga\tht^{m_n+\tfrac{1}{2}r_n}\\
\leq&\int f_nd\mu_n\leq -\al(f_n).
\end{align*}
That is, $\int f_nd\nu<-\al(f_n).$ So, for $\nu\in\mathcal{M}(T)$ with $h_{\nu}(T)\geq 2\ga\, h_{top}(T)$, then $\nu\not\in\mathcal{M}_{max}(f_n)$, implying that $f_n\in\mathcal{E}_{\ga}\cap \mathcal{U}$.

Therefore, $\mathcal{E}_{\ga}$ is open and dense in $C^r(X,\mathbb{R})$. This completes the whole proof.
\end{proof}

\section*{Acknowledgments}
{\footnotesize
We would like to thank Prof. Weixiao Shen, who suggested this problem,  and thank Prof. Yiwei Zhang for useful discussions.

This work was supported by the National Natural Science Foundation of China (No. 11701328) and Young Scholars Program of Shandong University, Weihai (No. 2017WHWLJH09).}

\baselineskip=2pt

\end{document}